\tikzset{
  optree/.style={scale=.5,thick,grow'=up,level distance=10mm,inner sep=1pt},
  comp/.style={draw=none,circle,fill,line width=0,inner sep=0pt},
  dot/.style={draw,circle,fill,inner sep=0pt,minimum width=3pt},
  circ/.style={draw,circle,inner sep=1pt,minimum width=4mm},
  emptycirc/.style={draw,circle,inner sep=1pt,minimum width=2mm},
  root/.style={level distance=10mm,inner sep=1pt},
  leaf/.style={draw=none,circle,fill,line width=0,inner sep=0pt},
  nodot/.style={draw,circle,inner sep=1pt},
}
\definecolor{Chocolat}{rgb}{0.36, 0.2, 0.09}
\definecolor{BleuTresFonce}{rgb}{0.215, 0.215, 0.36}
\let\oldtocsection=\tocsection
\let\oldtocsubsection=\tocsubsection
\renewcommand{\tocsection}[2]{\hspace{0em}\vspace{0.1em}\rule{0pt}{14pt}\oldtocsection{#1}{#2}\bf}
\renewcommand{\tocsubsection}[2]{\hspace{2em}\oldtocsubsection{#1}{#2}}
\newcommand{\PaPRT}{\ensuremath{\mathsf{PaPRT}}}
\newcommand{\R}{\ensuremath{\mathrm{R}}}
\def\Ho#1#2{\Lambda^{#2}_{#1}}
\def\De#1{\Delta^{#1}}
\newcommand{\sLi}{\ensuremath{\mathrm{sL}_\infty}}
\newcommand{\mc}{\ensuremath{\mathfrak{mc}}}
\newcommand{\hatCobar}{\ensuremath{\widehat{\Omega}}}
\newcommand{\sLialg}{\ensuremath{\mathrm{sL}_\infty\text{-}\,\mathsf{alg}}}
\newcommand{\sSe}{\mathsf{sSet}}
\def\Li{\mathrm{L}_\infty}
\def\Liegra{\mathrm{Lie}\textrm{-}\mathrm{graph}}
\def\ln{\mathrm{ln}}
\def\k{\mathbb{k}}
\def\F{\mathrm{F}}
\def\d{\mathrm{d}}
\def\P{\mathcal{P}}
\def\1{\mathbb{1}}
\def\id{\mathrm{id}}
\def\End{\mathrm{End}}
\def\gra{\mathrm{g}}
\newcommand{\Sy}{\mathbb{S}}
\def\dcGra{\mathsf{dsGra}}
\def\ldcGra{2\textsf{-}\mathsf{dsGra}}
\def\3ldcncGra{3\textsf{-}\mathsf{dsncGra}}
\def\kdcGra{\k\mathsf{dsGra}}
\def\btGra{{\Join}\textsf{-}{\dcGra}_{\Sy}}
\def\Lieadm{\mathrm{Lie}\textrm{-}\mathrm{adm}}
\def\Prelie{\mathrm{pre}\textrm{-}\mathrm{Lie}}
\def\Lie{\mathrm{Lie}}
\newcommand{\ac}{\scriptstyle \text{\rm !`}}
\def\G{\mathfrak{G}}
\def\g{\mathfrak{g}}
\def\Hom{\mathrm{Hom}}
\newcommand{\BCH}{\mathrm{BCH}}
\def\cc{\circledcirc}
\def\ad{\mathrm{ad}}
\def\MC{\mathrm{MC}}
\def\pap#1#2#3{#1\stackrel{#2}{\vcenter{\hbox{\text{\scalebox{1.2}{$\Join$}}}}}#3}
\def\Aut{\mathrm{Aut}}
\numberwithin{equation}{section}
\theoremstyle{plain}
\newtheorem{proposition}[equation]{Proposition}
\newtheorem{theorem}[equation]{Theorem}
\newtheorem{corollary}[equation]{Corollary}
\theoremstyle{definition}
\newtheorem{definition}[equation]{Definition}
\newtheorem{example}[equation]{\sc Example}
\newtheorem{application}[equation]{\sc Application}
\newtheorem{Historical}[equation]{\sc Historical remark}
\title{Effective integration of Lie type algebras}
\date{\today}
\author{Bruno Vallette}
\address{Universit\'e Sorbonne Paris Nord, Laboratoire de G\'eom\'etrie, Analyse et Applications, CNRS, UMR 7539, Villetaneuse, France}
\email{vallette@math.univ-paris13.fr}
\keywords{Lie theory, deformation theory, gauge group, homotopy algebras, operads, graphs}
\thanks{2020 \emph{Mathematics Subject Classification.}
Primary 17B60; Secondary 13D10, 17B01, 18N50, 18M60.
\newline
The authors are supported by ANR-20-CE40-0016 HighAGT}
\begin{document}

\maketitle

\begin{abstract}
This is a short survey on the recent developments made in the integration theory with effective formulas of algebraic structures stronger or higher than Lie algebras.
\end{abstract}

\tableofcontents

\section*{Introduction}
In Lie theory, the local structure of a Lie group is faithfully encoded into its tangent Lie algebra. 
Lie's third theorem goes the other way round and tells us how to integrate finite dimensional real Lie algebras in order to obtain simply connected real Lie groups. 
The fundamental exponential map yields a group morphism from the Lie algebra equipped with the universal  Baker--Campbell--Hausdorff formula to the underlying Lie group. 

\medskip

In deformation theory, the underlying heuristic of the characteristic $0$ case says that in the study of the moduli spaces of a certain type of structures, there should exist a differential graded Lie algebra whose Maurer--Cartan elements should correspond to the various structures and where the gauge group action should encode the equivalences between them. Many examples were worked out like  complex manifolds (Kodaira--Spencer \cite{KodairaSpencer58}), associative algebras (M. Gerstenhaber \cite{Gerstenhaber64}), and Poisson manifolds (M. Kontsevich \cite{Kontsevich03}), before that this becomes a precise statement settled by J. Pridham \cite{pridham2010unifying} and J. Lurie \cite{Lurie10} using homotopical methods and recently by B. Le Grignou and V. Roca i Lucio \cite{LGRL23bis} using the operadic calculus. 

\medskip

The general formulas of deformation theory are rather intricate, both for the gauge group structure, produced by the Baker--Campbell--Hausdorff formula, and for its action on Maurer--Cartan elements. But, for Lie brackets that are obtained by skew-symmetrizing  non-symmetric products, one can apply the ideas of the classical Lie theory to produce another (topological) group related to the gauge group under an exponential map. This approach has the bright advantage to produce simpler explicit and effective formulas for the new ``exponential'' group and its action on Maurer--Cartan elements. After recalling the straightforward case of associative algebras, we treat in detail the cases of pre-Lie algebras and Lie-graph algebras, which is a new type of Lie type algebraic structure introduced to control to deformation theory of morphisms of properads.
\[\mathrm{associative} \subset \mathrm{pre}\textrm{-}\mathrm{Lie} \subset \mathrm{Lie}\textrm{-}\mathrm{graph} \xrightarrow{-}
\boxed{\Lie}  \subset \ \mathrm{L}_\infty \subset \mathrm{curved}\ \mathrm{L}_\infty \subset 
\mathrm{absolute\ curved} \ \mathrm{EL}_\infty\]
Dually, one can ask how to integrate effectively algebraic structures which are weaker versions of Lie algebras, like homotopy Lie algebras, where the Jacobi relation is not fully satisfied. The answer is actually produced by homotopy theory and it amonts to considering higher gauges. This higher Lie theory carries the advantages of producing algorithmic Lie type models for the rational and the $p$-adic homotopy type of spaces. 
\medskip

This short survey is by no means exhaustive as this subject is rich enough to form the topic of a whole book. 
For instance, we do not address here the interesting integration theories of Leibniz algebras or post-Lie algebras. 
The sole purpose of the present text is to offer the reader a glimpse of the global picture of this program. 
Like the celebrated effective formulas of Agrachev--Gamkrelidze \cite{AgrachevGamkrelidze80} and Bruned--Hairer--Zambotti \cite{BHZ19}, the results detailled here are expected to receive more applications in the future; we hope that the present survey will ease the way in this direction. 

\medskip

\paragraph*{\bf Convention}
Except in the last section, we work over a field $\k$ of characteristic $0$. Chain complexes are homologically graded, that is their differential maps have degree $-1$. 

\medskip

\paragraph*{\bf Acknowledgements}
I would like to express my appreciation to the referees for their wise comments which helped to improve the present text. 


\section{The classical Lie case}

We begin by recalling of the universal formulas for the deformation and the integration theories of differential graded Lie algebras. One way to get them amonts to considering the stronger case of differential graded associative algebras. 

\begin{definition}[Complete differential graded Lie algebra]
A \emph{complete differential graded (dg) Lie algebras} is a Lie algebra $\g=\left(A, \d, \F, [\,,]\right)$ in the symmetric monoidal category of chain complexes equipped with compatible and complete filtrations: 
\[A_n =\F_0 A_n \supset \F_1 A_n \supset \cdots \F_k A_n \supset \F_{k+1} A_n\supset \cdots 
\ , \quad  
A_n \cong \lim_{k\in \mathbb{N}} A_n/\F_k A_n
\quad   \& \quad  \d\left(\F_k A_n\right)\subset \F_k A_{n-1}~.\]
\end{definition}

\begin{definition}[Maurer--Cartan element]
A \emph{Maurer--Cartan element} of a complete dg Lie algebra $\g$ is a degree $-1$ element $\alpha$ satisfying the \emph{Maurer--Cartan equation}: 
\[\d\alpha + \tfrac12 [\alpha, \alpha]=0~.\]
\end{definition}

We denote by $\MC(\g)$ the set of Maurer-Cartan elements. 
When the space $\g_{-1}$ is finite dimensional, the Maurer--Cartan equation defines a finite intersection of quadrics, so the Maurer--Cartan set 
$\MC(\g)$ becomes a variety. 
The degree $0$ elements $\lambda \in \F_1A_0$~, called \emph{the gauges}, canonically give rise to a vector fields 
\[ -\d\lambda + \ad_\lambda \in \Gamma( \mathrm{T}\,  \mathrm{MC}(\g))\ ,\]
where the adjoint representation is $\ad_\lambda(x)\coloneq [\lambda, x]$~.

\begin{definition}[Gauge equivalence]
Two Maurer--Cartan elements are \emph{gauge equivalent} if there exists a gauge  $\lambda\in \F_1 A_0$ for which the flow of the vector field $-\d\lambda + \ad_\lambda$ relates them in finite time.
\end{definition}

In order to solve the associated differential equation, we consider the \emph{differential extension} 
\[\g^+\coloneq \left(A\oplus \k \delta, \d, \F, [\,,]\right)~, \quad \text{with}  \quad |\delta|=-1~,\  \d(\delta)=0~, \   
[\delta, x]= \d(x)~,\  \text{and} \ [\delta, \delta]=0~,\]
inside which we embed $\g$ as follows: $x\mapsto x$ when $|x|\neq -1$ and $x \mapsto \delta + x$ when $|x|=-1$~. 
In the differential extension, the Maurer-Cartan equation becomes the square-zero equation 
$[\delta +\alpha, \delta +\alpha]$ and the vector field becomes simply $\ad_\lambda(\delta+\alpha)$~. Therefore the 
solution to the associated differential equation starting at $\delta+\alpha$ is equal to 
\[\exp(t\,\ad_\lambda)(\delta+\alpha)=\left(\id +t\, \ad_\lambda + \tfrac12 t^2\, \ad_\lambda^2+\cdots  \right)(\delta+\alpha)~. \]
In the end, using the degree $0$ element $\lambda$, the Maurer-Cartan element $\alpha$ is gauge equivalent to 
\[\exp(\ad_\lambda)(\alpha)+\frac{\id-\exp(\ad_\lambda)}{\ad_{\lambda}}(\d\lambda)~.\]

The gauge equivalence actually comes from a group action as follows. When the complete dg Lie algebra is given by a complete dg unital associative algebra $\left(A, \d, \F, \star, 1\right)$ under skew-symmetrization of the product $[\,,]\coloneq \star - \star^{(12)}$~, 
the Maurer--Cartan equation is equal to 
\[ \d \alpha +\alpha \star \alpha =0~.\]
In this case, one can consider the topological complete group made up of \emph{the group-like elements}: 
\[\G\coloneq \left(1+\F_1A_0\,, \star\,, 1  \right)~.\]
Under the classical exponential 
$\exp(\lambda)\coloneq 1 + \lambda + \tfrac12 \lambda^{\star 2}+\tfrac16 \lambda^{\star 3} \cdots$
and logarithm $\ln(1+\lambda)\coloneq \lambda - \tfrac12 \lambda^{\star 2} +
\tfrac13 \lambda^{\star 3} + \cdots $ maps, this topological complete group is isomorphic to 
\[\Gamma\coloneq \left(\F_1A_0\,, \BCH\,, 0  \right)~,\]
where $\BCH(\lambda, \mu)\coloneq\ln\left(\exp(\lambda)\star \exp(\mu)\right)$~.
In this case and forgetting the differential, the \emph{gauge action} is given by the conjugation with the exponential:
\[\exp(\ad_\lambda)(\alpha)=\exp(\lambda)\star \alpha \star \exp(-\lambda)~.\]

\begin{theorem}[Baker--Campbell-Hausdorff, see \cite{BF12}]
Defined in the free complete unital associative algebra on two generators $x,y$, the \emph{Baker--Campbell--Hausdorff (BCH) formula} 
\[\BCH(x,y)\coloneq\ln\left(\exp(x) \exp(y)\right)\in \widehat{\Lie}(x,y)\]
actually lives in the free complete Lie algebra on $x$ and $y$. 
\end{theorem}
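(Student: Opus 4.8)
The plan is to use the Hopf-algebraic characterization of Lie elements inside the completed free associative algebra. Write $\widehat{A}(x,y)$ for the completion of the free unital associative algebra $A(x,y)$ with respect to the word-length filtration, and endow it with the continuous coproduct $\Delta$ obtained by declaring the generators $x$ and $y$ to be primitive, $\Delta(x) = x\wotimes 1 + 1\wotimes x$ and likewise for $y$, and extending multiplicatively to $\widehat{A}(x,y)\wotimes \widehat{A}(x,y)$. The key input is \emph{Friedrichs' criterion}: an element of $\widehat{A}(x,y)$ with vanishing constant term lies in the free complete Lie algebra $\widehat{\Lie}(x,y)$ if and only if it is primitive, that is $\Delta(z) = z\wotimes 1 + 1\wotimes z$. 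Granting this, the theorem reduces to showing that $\BCH(x,y)$ is primitive.

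First I would record that $\exp(x)$ and $\exp(y)$ are \emph{group-like}. Since $x\wotimes 1$ and $1\wotimes x$ commute in $\widehat{A}(x,y)\wotimes\widehat{A}(x,y)$ and $\Delta$ is a continuous algebra morphism,
\[\Delta(\exp(x)) = \exp(\Delta(x)) = \exp(x\wotimes 1)\,\exp(1\wotimes x) = \exp(x)\wotimes\exp(x)~,\]
and similarly for $y$. Because the product of two group-like elements is again group-like (using once more that $\Delta$ is multiplicative), the element $g\coloneq \exp(x)\exp(y)$ satisfies $\Delta(g)= g\wotimes g$; note $g$ has constant term $1$, so $\ln(g)$ is well defined in the filtered completion.

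Next I would take logarithms. Writing $g\wotimes g = (g\wotimes 1)(1\wotimes g)$ as a product of commuting factors of constant term $1$, the additivity of $\ln$ on commuting elements gives
\[\Delta(\ln g) = \ln(\Delta g) = \ln(g\wotimes g) = \ln(g)\wotimes 1 + 1\wotimes \ln(g)~,\]
so that $\BCH(x,y)=\ln(g)$ is primitive. By Friedrichs' criterion it therefore lies in $\widehat{\Lie}(x,y)$, which is the claim.

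The main obstacle is Friedrichs' criterion itself, and specifically the non-formal direction that every primitive element is a Lie element. The inclusion of Lie elements into the primitives is immediate---the generators are primitive and the primitives form a Lie subalgebra under the commutator because $\Delta$ is an algebra morphism---so the content is the reverse inclusion. I would establish it degree by degree on the associated graded, where the statement becomes the classical fact that the primitives of the tensor Hopf algebra are exactly the free Lie algebra; this is proved either through the Poincar\'e--Birkhoff--Witt theorem together with the cocommutativity of $\Delta$, or more explicitly via the Dynkin--Specht--Wever idempotent, which both projects onto and recognizes Lie elements in each word-length component. Passing back to the completion is then harmless, since each of $\Delta$, $\exp$, and $\ln$ is continuous for the word-length filtration and is computed degreewise.
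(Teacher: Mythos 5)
Your proposal is correct, but there is nothing in the paper to compare it against: the survey states this theorem without proof, simply pointing to the reference [BF12] (Bonfiglioli--Fulci), and only comments afterwards that the result guarantees the gauge group is well defined in any complete dg Lie algebra. Your argument is the classical Hopf-algebraic proof: make $x$ and $y$ primitive for a coproduct $\Delta$ on the completed free associative algebra, observe that $\exp(x)$ and $\exp(y)$ are then group-like, that products of group-like elements are group-like, and that the logarithm of a group-like element is primitive (using additivity of $\ln$ on the commuting factors $g\,\widehat{\otimes}\,1$ and $1\,\widehat{\otimes}\,g$), so that everything reduces to Friedrichs' criterion identifying the primitives of the (completed) tensor algebra with the (completed) free Lie algebra. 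You correctly isolate the one non-formal input---the inclusion of primitives into Lie elements---and indicate the two standard ways to prove it (PBW/cocommutativity, or the Dynkin--Specht--Wever projector), both of which are legitimate in characteristic $0$, which is the paper's standing convention; the reduction to homogeneous components, justified by the fact that $\Delta$, $\exp$, and $\ln$ are computed degreewise for the word-length filtration, is also sound. This is essentially one of the proofs presented in the cited monograph [BF12], so your write-up can be regarded as supplying the proof the survey deliberately omits.
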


This celebrated theorem of Baker--Campbell-Hausdorff says that the BCH formula is actually made up of iterated brackets. Therefore it  shows that the gauge group is actually well-defined in any complete dg Lie algebra. 

\begin{definition}[Gauge group]
The \emph{gauge group} of a complete dg Lie algebra is the topological complete group defined by 
\[\Gamma\coloneq \left(\F_1A_0\,, \BCH\,, 0  \right)~.\]
\end{definition}

\begin{proposition}[Gauge group action]
In any complete dg Lie algebra, the gauge group acts continuously on Maurer--Cartan elements under the formula 
\[\exp(\ad_\lambda)(\alpha)+\frac{\id-\exp(\ad_\lambda)}{\ad_{\lambda}}(\d\lambda)~.\]
\end{proposition}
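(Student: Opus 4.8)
The plan is to realise the stated formula as the restriction of a single Lie algebra automorphism of the differential extension $\g^+$, namely the adjoint exponential $\exp(\ad_\lambda)$, and then to deduce all three required properties — well-definedness on $\MC(\g)$, the group action axioms, and continuity — from the structural behaviour of this automorphism.

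First I would record the key identity in $\g^+$. Using $[\delta,x]=\d x$ together with graded antisymmetry one computes $\ad_\lambda(\delta)=[\lambda,\delta]=-\d\lambda$ (the sign is harmless since $|\lambda|=0$), hence $\ad_\lambda^{\,n}(\delta)=-\ad_\lambda^{\,n-1}(\d\lambda)$ for $n\ge 1$. Resumming the exponential series and recognising the power series $\sum_{n\ge 1}\tfrac{1}{n!}x^{n-1}=\tfrac{\exp(x)-\id}{x}$ then yields
\[\exp(\ad_\lambda)(\delta+\alpha)=\delta+\exp(\ad_\lambda)(\alpha)+\frac{\id-\exp(\ad_\lambda)}{\ad_\lambda}(\d\lambda)=\delta+\lambda\cdot\alpha~,\]
where $\lambda\cdot\alpha$ denotes the claimed formula. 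Thus the coefficient of $\delta$ remains $1$ and $\lambda\cdot\alpha$ lies in $A_{-1}$, so the formula is precisely the transport of $\delta+\alpha$ by $\exp(\ad_\lambda)$ under the embedding $\alpha\mapsto\delta+\alpha$.

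Next I would check well-definedness. Since $|\lambda|=0$, the operator $\ad_\lambda$ is a degree $0$ derivation of the graded Lie bracket by the graded Jacobi identity, and as $\lambda\in\F_1A_0$ the series $\exp(\ad_\lambda)$ converges in the complete filtered setting and defines a Lie algebra automorphism of $\g^+$. Recalling that in $\g^+$ the Maurer--Cartan equation for $\alpha$ is exactly the square-zero equation $[\delta+\alpha,\delta+\alpha]=0$, a Lie automorphism sends square-zero elements to square-zero elements, so $[\delta+\lambda\cdot\alpha,\delta+\lambda\cdot\alpha]=0$ and $\lambda\cdot\alpha\in\MC(\g)$. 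For the group action axioms, $0\cdot\alpha=\alpha$ is immediate from $\exp(\ad_0)=\id$, and for compatibility with the product I would use the composition identity $\exp(\ad_\lambda)\exp(\ad_\mu)=\exp(\ad_{\BCH(\lambda,\mu)})$: in the associative algebra $\End(\g^+)$ one has $\exp(\ad_\lambda)\exp(\ad_\mu)=\exp\bigl(\BCH(\ad_\lambda,\ad_\mu)\bigr)$, and since $\BCH(x,y)$ is a series of iterated Lie brackets while $\ad$ is a morphism of Lie algebras ($\ad_{[\lambda,\mu]}=[\ad_\lambda,\ad_\mu]$), one may pull $\ad$ outside to obtain $\BCH(\ad_\lambda,\ad_\mu)=\ad_{\BCH(\lambda,\mu)}$. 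Applying both sides to $\delta+\alpha$ and invoking the key identity gives $\lambda\cdot(\mu\cdot\alpha)=\BCH(\lambda,\mu)\cdot\alpha$. Continuity is then automatic, as the whole construction is assembled from series ($\exp$, $\ad$, and the entire power series $\tfrac{\id-\exp(x)}{x}=-\sum_{n\ge0}\tfrac{x^n}{(n+1)!}$, which has no pole) that converge for the complete filtration topology.

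The main obstacle I anticipate is not any single computation but the bookkeeping that makes $\exp(\ad_\lambda)$ genuinely well-behaved: verifying convergence and the automorphism property from completeness and the condition $\lambda\in\F_1$, being careful with the sign in $\ad_\lambda(\delta)=-\d\lambda$, and above all establishing the composition identity $\exp(\ad_\lambda)\exp(\ad_\mu)=\exp(\ad_{\BCH(\lambda,\mu)})$. This last point is exactly where the already-stated Baker--Campbell--Hausdorff theorem is indispensable, since it guarantees that $\BCH(\lambda,\mu)$ is again a Lie element of $\F_1A_0$ on which $\ad$ can be applied term by term.
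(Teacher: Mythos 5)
Your proposal is correct and takes essentially the same route as the paper: the paper's one-line proof rests precisely on the composition identity $\exp\left(\ad_{\BCH(x,y)}\right)=\exp(\ad_x)\circ\exp(\ad_y)$ (cited from Robert-Nicoud--Vallette, Proposition~5.14), which is exactly the crux of your argument, and the differential-extension mechanism $\exp(\ad_\lambda)(\delta+\alpha)=\delta+\lambda\cdot\alpha$ that you exploit is the one the paper sets up in the paragraphs immediately preceding the proposition. The only difference is that you re-derive the composition identity from the stated BCH theorem (via $\BCH(\ad_\lambda,\ad_\mu)=\ad_{\BCH(\lambda,\mu)}$ in $\End(\g^+)$) and fill in the convergence and automorphism bookkeeping, rather than citing it.
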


\begin{proof}[Sketch of proof]
This proof relies on the following characterisation of the BCH formula 
\[\exp\left(\ad_{\BCH(x,y)}\right) = \exp(\ad_x) \circ \exp(\ad_y) \]
established in \cite[Proposition~5.14]{Robert-NicoudVallette20}.
\end{proof}

One can know wonder how effective or algorithmic is the BCH formula. Some closed forms exist, the first and most famous one was given by Dynkin (1947)
\begin{multline*}
\BCH(x,y)=\sum_{n\geqslant 1} \frac{(-1)^{n-1}}{n}\sum_{p_i+q_i\geqslant 1 \atop i=\{1, \ldots, n-1\}}
\left(
\frac{\ad_x^{p_1} \circ \ad_y^{q_1}\circ \cdots \circ \ad_x^{p_{n-1}} \circ \ad_y^{q_n-1}(x) }{(1+\sum_{i=1}^{n-1} p_i+q_i)p_1!q_1! \ldots 
p_{n-1}!q_{n-1}!}
+\right.\\
\left.
\sum_{p_n\geqslant 0} \frac{\ad_x^{p_1} \circ \ad_y^{q_1}\circ \cdots \circ \ad_x^{p_{n-1}} \circ \ad_y^{q_n-1}\circ \ad_x^{p_n}(y)}{(p_n+1+\sum_{i=1}^{n-1} p_i+q_i)p_1!q_1! \ldots p_{n-1}!q_{n-1}!p_n!}\right)~.
\end{multline*}
However, the BCH formula and the formula the action on  Maurer--Cartan elements remain intricate and difficult to evaluate. 

\medskip

To summarize, like in the case of a Lie algebra given by a Lie group and related to it by an exponential map, when a complete dg Lie algebra comes from the skew-symmetrization of a complete dg unital associative algebra, the analytic exponential map defines a topological complete group isomorphism from the gauge group to the \emph{deformation gauge group}
\[\G= \left(1+\F_1A_0\,, \star\,, 1  \right)~.\]
Its action on Maurer--Cartan elements is much simpler since given by the conjugation, up to a constant term coming from the differential:
\[(1+\lambda)\star \alpha \star (1+\lambda)^{-1}- \d \lambda \star (1+\lambda)=
(1+\lambda)\star \alpha \star (1-\lambda+\lambda^{\star 2} -\lambda^{\star 3}+\cdots)- \d \lambda \star (1+\lambda)~.\]

\begin{application}
These effective formulae can be used to the complete dg unital associative algebra encoding morphisms of dg associative algebras, see \cite[Section~3]{DotsenkoShadrinVallette16}. 
\end{application}


\section{The stronger cases: pre-Lie and Lie-graph algebras}

There are two intermediate algebraic structures that sit between Lie algebras and associative algebras, pre-Lie algebras and Lie-graph algebras: the skew-symmetrization of their product produces a Lie bracket and any associative product satisfy their defining relations. 
In these two cases, one can also coin simpler and effective integration formulas for a group isomorphic to the gauge group and its action by "conjugation" on Maurer--Cartan elements. This  was developed in \cite{DotsenkoShadrinVallette16, DotsenkoShadrinVallette22, CamposVallette24} with in a view toward application to 
the deformation theories of morphisms of operads and properads respectively. 

\subsection{Pre-Lie algebras}

\begin{definition}[Pre-Lie algebra]
A \emph{pre-Lie algebra} is defined by a binary product whose associator is right symmetric: 
$$(x\star y)\star z - x\star (y\star z)=(-1)^{|y||z|}\big( (x\star z)\star y - x\star (z\star y)\big) \ . $$ 
\end{definition}

Associative products are particular examples of pre-Lie products and the skew-symmetrization of a pre-Lie product gives a Lie bracket. The Maurer--Cartan equation takes the simpler form 
\[\d \alpha + \alpha \star \alpha =0~.\]
We consider complete dg pre-Lie algebras $\left(A, \d, \F, \star, 1\right)$ which are \emph{left unital}, that is $1\star x = x$~. 
Since pre-Lie products fail to be associative in general, we need to consider the following product to define the relevant deformation gauge group on group-like elements.

\begin{definition}[Circle product]
For any  $y\in \F_1A_0$, 
the  \emph{circle product} is defined as the sum 
$${x \circledcirc (1+y) := \sum_{n\geqslant 0}  {\displaystyle \frac{1}{n!}}  \{x; \underbrace{y, \ldots, y}_{n}\}}$$
 of the \emph{symmetric braces}
$$\begin{array}{rcl}
\{ x; \}&:=&x \\
\{ x; y_1\}&:=&x\star y_1 \\
\{ x; y_1,y_2\}&:=& \{\{x; y_1\};  y_2\} - \{x; \{ y_1;  y_2\}\}= (x\star y_1)\star y_2 - x\star (y_1 \star y_2)\\
&&\\
\{ x; y_1,\ldots, y_n\}&:=&  \{ \{x; y_1,\ldots, y_{n-1}\}; y_n\}- \displaystyle \sum_{i=1}^{n-1} \{x; y_1, \ldots, y_{i-1}, 
\{y_i; y_n\}, y_{i+1}, \ldots, y_{n-1}\}   \ .\\
\end{array}
$$
\end{definition}

The last missing ingredient is the \emph{pre-Lie exponential map}
$$\exp(\lambda):=1 +\lambda + \frac{\lambda^{\star 2}}{2!} + \frac{\lambda^{\star 3}}{3!} +\cdots \ ,$$
which is defined by the right hand-side iteration of the pre-Lie product 
$${\lambda^{\star n}:=\underbrace{(\cdots((\lambda \star \lambda) \star \lambda)\cdots )\star \lambda}_{n\  \text{times}}}\ . $$
It admits an inverse map given by the "pre-Lie logarithm map" known as the \emph{Magnus expansion map} \cite{Manchon11}
$$\ln(1+\lambda)=\Omega(\lambda):=\lambda - \frac12 \lambda\star \lambda + \frac14 \lambda\star (\lambda \star \lambda)
+ \frac{1}{12}(\lambda \star \lambda)\star \lambda+\cdots \ .$$

\begin{theorem}[\cite{DotsenkoShadrinVallette16}]\label{thm:preLie}
Under the pre-Lie exponential map and the Magnus expansion map, the gauge group associated to 
any left-unital complete dg pre-Lie algebras is isomorphic to the \emph{deformation gauge group} made up of group-like elements equipped with the circle product:
$$
\Gamma\coloneq\left(\F_1A_0, \BCH, 0
\right)\cong  
\left(1+\F_1A_0, \circledcirc, 1\right) \eqcolon \G
\ .
$$
The latter acts on Maurer--Cartan elements under the following conjugation type formula 
$$\big((1+\lambda) \star \alpha\big) \circledcirc (1+\lambda)^{-1} - \d \lambda \circledcirc (1+\lambda)^{-1}  \ .$$
\end{theorem}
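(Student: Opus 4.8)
The plan is to prove the two assertions separately: first the group isomorphism $\exp\colon\Gamma\xrightarrow{\ \sim\ }\G$, and then the conjugation formula for the action. For the isomorphism I would proceed in three steps. First, I would check that $\left(1+\F_1A_0,\circledcirc,1\right)$ really is a group. The computation $\{1;y_1,\dots,y_n\}=0$ for $n\geqslant 2$ shows at once that $1$ is a two-sided unit, while associativity of $\circledcirc$ is exactly the associativity of the extension of the pre-Lie product to the symmetric algebra: the defining recursion of the symmetric braces is precisely the relation making the right action $x\mapsto\{x;y_1,\dots,y_n\}$ of the Guin--Oudom (Grossman--Larson) algebra on $A$ well defined, whence $(x\circledcirc g)\circledcirc h=x\circledcirc(g\circledcirc h)$ for group-like $g,h$; inverses are then produced by inverting the series $1+\lambda$ degreewise along the complete filtration. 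Second, I would show that the pre-Lie exponential and the Magnus expansion $\Omega$ are mutually inverse bijections between $\F_1A_0$ and $1+\F_1A_0$; since both are built from $\star$ with identity linear part, this is the content of the defining recursion of $\Omega$ and follows by induction on the filtration.

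The decisive third step is the intertwining identity
\[
\exp(\BCH(\lambda,\mu))=\exp(\lambda)\circledcirc\exp(\mu)~,
\]
which upgrades $\exp$ to a group morphism. Here I would argue by universality: all four maps ($\exp$, $\Omega$, $\circledcirc$, and $\BCH$) are given by the same universal formulas in every complete pre-Lie algebra, so it suffices to prove the identity in the completed free pre-Lie algebra on two generators $x,y$ (rooted trees labelled by $x,y$), with $\lambda=x$ and $\mu=y$. On this free object I would invoke the Guin--Oudom isomorphism $\widehat{S}(\g)\cong\widehat{U}(\g_{\Lie})$, which transports the circle product of group-like elements to the associative product of the completed universal enveloping algebra of the associated Lie algebra $\g_{\Lie}=(\g,[\,,])$, where the classical identity $g_\lambda\,g_\mu=g_{\BCH(\lambda,\mu)}$ for group-like elements holds. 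The main obstacle of the whole proof is then to match the pre-Lie exponential $\exp(\lambda)=\sum_n\lambda^{\star n}/n!$ with the enveloping-algebra group-like $g_\lambda$ under this isomorphism, equivalently to show that the ``forest-to-tree'' projection implicit in the circle product sends the symmetric exponential $\sum_n\lambda^{n}/n!\in\widehat{S}(\g)$ to $\exp(\lambda)$. This identification rests on a careful bookkeeping of the symmetric braces, and is where the real work sits.

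For the action, I would start from the universal formula $\exp(\ad_\lambda)(\alpha)+\frac{\id-\exp(\ad_\lambda)}{\ad_\lambda}(\d\lambda)$ valid in the underlying complete dg Lie algebra, and rewrite it in pre-Lie terms using the differential extension trick of Section~1 adapted to the pre-Lie setting: I would enlarge $\g$ by a degree $-1$ element $\delta$ with $\delta\star x=\d x$, $\ x\star\delta=0$, and $\delta\star\delta=0$, so that the Maurer--Cartan equation $\d\alpha+\alpha\star\alpha=0$ becomes the $\star$-square-zero equation $(\delta+\alpha)\star(\delta+\alpha)=0$ and the gauge action becomes the circle conjugation $\big((1+\lambda)\star(\delta+\alpha)\big)\circledcirc(1+\lambda)^{-1}$. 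Expanding $(1+\lambda)\star(\delta+\alpha)=\delta+(1+\lambda)\star\alpha$ and projecting away the leading $\delta$, the term $\delta\circledcirc(1+\lambda)^{-1}$ should produce exactly the differential correction $-\d\lambda\circledcirc(1+\lambda)^{-1}$, using that $\d$ is a derivation for $\star$. That this rewriting agrees with $\exp(\ad_\lambda)$ follows, as in the sketch of the gauge action proposition, from the fact that the left multiplication $L\colon\g_{\Lie}\to\End(A)$ is a morphism of Lie algebras, together with the characterisation $\exp(\ad_{\BCH(x,y)})=\exp(\ad_x)\circ\exp(\ad_y)$ of \cite[Proposition~5.14]{Robert-NicoudVallette20}. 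The delicate point here is the sign and filtration bookkeeping of the correction term $\delta\circledcirc(1+\lambda)^{-1}$, which I expect to be routine but tedious.
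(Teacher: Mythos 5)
Your first half is sound and follows essentially the route of the cited source \cite{DotsenkoShadrinVallette16}: group structure of $(1+\F_1A_0,\circledcirc,1)$ via the Oudom--Guin product, mutual inversion of the pre-Lie exponential and the Magnus expansion, and the intertwining identity proved in the completed free pre-Lie algebra on two generators by transporting everything to $\widehat{U}(\g_{\Lie})$, where group-likes multiply by $\BCH$; the ``bookkeeping'' step you isolate (matching the associative exponential with the right-normed pre-Lie exponential under this identification) is indeed where the work sits, and it is exactly what is done there.

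The second half, however, contains a genuine error: your differential extension is \emph{not} a pre-Lie algebra, so the brace and circle-product calculus cannot be applied to it and the subsequent manipulations are unjustified. The convention in this paper is \emph{right} pre-Lie (associator symmetric in its last two arguments), and with your definition $\delta\star x=\d x$, $x\star\delta=0$ one gets
\[
(x\star\delta)\star y - x\star(\delta\star y) \;=\; -\,x\star\d y
\qquad\text{whereas}\qquad
(x\star y)\star\delta - x\star(y\star\delta) \;=\; 0\,,
\]
so right-symmetry fails whenever $x\star\d y\neq 0$. (Your side remark that the left multiplications $\g_{\Lie}\to\End(A)$ form a Lie algebra morphism is the \emph{left} pre-Lie statement; here it is the right multiplications that give an anti-representation, which is the same handedness confusion.) The fix is to let $\delta$ act by \emph{right} multiplication: $\delta\star x=0$ and $x\star\delta=-(-1)^{|x|}\d x$, so $\lambda\star\delta=-\d\lambda$ for a gauge; one checks this is a complete dg pre-Lie algebra, that $\delta+\alpha$ squares to zero precisely when $\alpha$ is Maurer--Cartan, and that the gauge flow becomes $\exp(\ad_{\ln(1+\lambda)})(\delta+\alpha)$. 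But then the mechanism is the opposite of the one you describe: since $\delta\star(\,\cdot\,)=0$, all braces $\{\delta;\dots\}$ vanish and $\delta\circledcirc(1+\lambda)^{-1}=\delta$ on the nose, so \emph{no} correction comes out of that term; instead $-\d\lambda$ is created \emph{before} conjugating, inside $(1+\lambda)\star(\delta+\alpha)=\delta+(1+\lambda)\star\alpha-\d\lambda$, and linearity of $\circledcirc$ in its left argument then yields exactly $\big((1+\lambda)\star\alpha\big)\circledcirc(1+\lambda)^{-1}-\d\lambda\circledcirc(1+\lambda)^{-1}$ after subtracting $\delta$. Note also that the paper itself does not get this last term from \cite{DotsenkoShadrinVallette16} but from the Lie-graph case \cite{CamposVallette24}, whose proof combines a (correctly handed, and there higher-dimensional) differential extension with classical Lie theory applied to finite-dimensional truncations; your one-dimensional extension, once corrected as above, is the legitimate pre-Lie specialization of that argument, but as written it fails at the definition.
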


\begin{proof}[Sketch of proof]
The various elements of proof of this theorem can be found in \cite[Section~4]{DotsenkoShadrinVallette16}, except for the last term 
of the action formula. Instead of showing it here, we refer to the proof of the same statement (\cref{thm:Liegraph}) in the next and more general case (Lie-graph algebras), which includes the present one. 
\end{proof}

When the pre-Lie product is  associative, one  recovers the previous formulas.

\begin{application}
The Maurer--Cartan elements of the complete dg pre-Lie algebra 
$$  \left(
\Hom_{\Sy} \left({\overline{\P}}^{\ac}, \End_V\right),  \d, \star, {1}
\right)$$
associated to a Koszul operad $\P$ and a chain complex $V$ correspond to \emph{homotopy $\P$-algebra} structures, i.e. 
$\P_\infty=\Omega \P^{\ac}$-algebra structures,  on $V$.
In this case, the deformation gauge group is the group of \emph{$\infty$-isotopies} between such algebraic structures, that is the invertible $\infty$-morphisms whose first component is the identity of $V$, see \cite[Section~5]{DotsenkoShadrinVallette16}. 

\medskip 

These results were used in a crucial way in the proofs given in \cite{campos2019lie} of the following two fundamental theorems that were so far out of reach in representation theory and rational homotopy theory respectively: 
the universal enveloping algebra detects the isomorphism classes of nilpotent Lie algebras
and the rational homotopy type of a space is determined by its associative dg algebra of rational singular cochains. 
\end{application}

There is a little caveat: in the above displayed formula for the deformation gauge group action  appears the inverse can theoretically be computed by $(1+\lambda)^{-1}=\exp(-\Omega(\lambda))$, but we are lacking (so far) a closed formula for the Magnus expansion map. It turns out that the free pre-Lie algebra on a space $V$ is given by rooted trees \cite{ChapotonLivernet01} with vertices labelled by elements of $V$. Equivalently this means that rooted trees are in one-to-one correspondence with all the possible iterations of generic pre-Lie products. In this language,  the inverse of group-like elements for the circle product is given by the sum of rooted trees normalized by the cardinal of their automorphism group:
$$(1-\lambda)^{-1}=  \sum_{t\in \mathsf{RT}} 
\frac{1}{|\mathrm{Aut}\, t|}\, t(\lambda)\  .$$

\subsection{Lie-graph algebras}
In the associative and pre-Lie cases, there were clever ways to iterate the original binary product in order to produce all the required integration-deformation formulas. This method fails for general \emph{Lie-admissible products} that are products whose skew-symmetrization yields a Lie bracket. The  formula for the inverse mentioned just above suggests another approach: introduce a suitable algebraic structure with all its operations at once, not by generating operation(s) and their relation(s). This is precisely the main advantage of the notion of an operad introduced more than 50 years ago. 

\begin{definition}[Directed simple graph] 
	We call \emph{directed simple graph} a connected graph $\gra$, directed by a global flow from top to bottom (meaning that all edges are oriented downwards), with at least one vertex, and at most one edge between two vertices. 
	The number of vertices of a graph $\gra$ is denoted by $|\gra|$ and they are  labeled bijectively by $1,\ldots, |\gra|$. 
\end{definition}
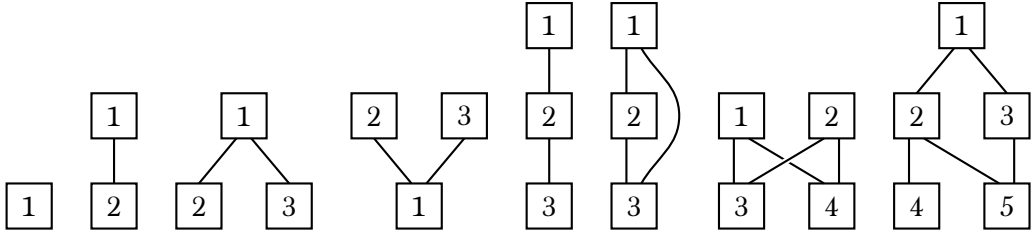
\begin{figure*}[h!]
	\begin{tikzpicture}[scale=0.6]
		\draw[thick]	(0,0)--(1,0)--(1,1)--(0,1)--cycle;
		\draw (0.5,0.5) node {{$1$}} ; 
	\end{tikzpicture} 
	\quad 
	\begin{tikzpicture}[scale=0.6]
		\draw[thick]	(0,0)--(1,0)--(1,1)--(0,1)--cycle;
		\draw[thick]	(0,2)--(1,2)--(1,3)--(0,3)--cycle;
		\draw[thick]	(0.5,1)--(0.5,2);
		\draw (0.5,0.5) node {{$2$}} ; 
		\draw (0.5,2.5) node {{$1$}} ; 
	\end{tikzpicture}
	\quad 
	\begin{tikzpicture}[scale=0.6]
		\draw[thick]	(0,0)--(1,0)--(1,1)--(0,1)--cycle;
		\draw[thick]	(2,0)--(3,0)--(3,1)--(2,1)--cycle;
		\draw[thick]	(1,2)--(2,2)--(2,3)--(1,3)--cycle;
		\draw[thick]	(0.5,1)--(1.33,2);
		\draw[thick]	(2.5,1)--(1.66,2);
		\draw (1.5,2.5) node {{$1$}} ; 
		\draw (0.5,0.5) node {{$2$}} ; 
		\draw (2.5,0.5) node {{$3$}} ; 	
	\end{tikzpicture}
	\quad 
	\begin{tikzpicture}[scale=0.6]
		\draw[thick]	(0,2)--(1,2)--(1,3)--(0,3)--cycle;
		\draw[thick]	(2,2)--(3,2)--(3,3)--(2,3)--cycle;
		\draw[thick]	(1,0)--(2,0)--(2,1)--(1,1)--cycle;
		\draw[thick]	(0.5,2)--(1.33,1);
		\draw[thick]	(2.5,2)--(1.66,1);
		\draw (1.5,0.5) node {{$1$}} ; 
		\draw (0.5,2.5) node {{$2$}} ; 
		\draw (2.5,2.5) node {{$3$}} ; 	
	\end{tikzpicture}
	\quad 
	\begin{tikzpicture}[scale=0.6]
		\draw[thick]	(0,0)--(1,0)--(1,1)--(0,1)--cycle;
		\draw[thick]	(0,2)--(1,2)--(1,3)--(0,3)--cycle;
		\draw[thick]	(0,4)--(1,4)--(1,5)--(0,5)--cycle;
		\draw[thick]	(0.5,1)--(0.5,2);
		\draw[thick]	(0.5,3)--(0.5,4);	
		\draw (0.5,0.5) node {{$3$}} ; 
		\draw (0.5,2.5) node {{$2$}} ; 
		\draw (0.5,4.5) node {{$1$}} ; 
	\end{tikzpicture}
	\quad 
	\begin{tikzpicture}[scale=0.6]
		\draw[thick]	(0,0)--(1,0)--(1,1)--(0,1)--cycle;
		\draw[thick]	(0,2)--(1,2)--(1,3)--(0,3)--cycle;
		\draw[thick]	(0,4)--(1,4)--(1,5)--(0,5)--cycle;
		\draw[thick]	(0.33,1)--(0.33,2);
		\draw[thick]	(0.33,3)--(0.33,4);
		\draw[thick]	(0.66,1) to[out=60,in=270] (1.5,2.5) to[out=90,in=300] (0.66,4);	
		\draw (0.5,0.5) node {{$3$}} ; 
		\draw (0.5,2.5) node {{$2$}} ; 
		\draw (0.5,4.5) node {{$1$}} ; 
	\end{tikzpicture}
	\quad
	\begin{tikzpicture}[scale=0.6]
		\draw[thick]	(0.66,2)--(2.33,1);	
		\draw[draw=white,double=black,double distance=2*\pgflinewidth,thick]	(0.66,1)--(2.33,2);		
		\draw[thick]	(0,0)--(1,0)--(1,1)--(0,1)--cycle;
		\draw[thick]	(2,0)--(3,0)--(3,1)--(2,1)--cycle;
		\draw[thick]	(0,2)--(1,2)--(1,3)--(0,3)--cycle;
		\draw[thick]	(2,2)--(3,2)--(3,3)--(2,3)--cycle;	
		\draw[thick]	(0.33,1)--(0.33,2);
		\draw[thick]	(2.66,1)--(2.66,2);
		\draw (0.5,2.5) node {{$1$}} ; 
		\draw (2.5,2.5) node {{$2$}} ; 	
		\draw (0.5,0.5) node {{$3$}} ; 
		\draw (2.5,0.5) node {{$4$}} ; 	
	\end{tikzpicture}
	\quad 
	\begin{tikzpicture}[scale=0.6]
		\draw[thick]	(0.66,2)--(2.33,1);	
		\draw[thick]	(0,0)--(1,0)--(1,1)--(0,1)--cycle;
		\draw[thick]	(2,0)--(3,0)--(3,1)--(2,1)--cycle;
		\draw[thick]	(0,2)--(1,2)--(1,3)--(0,3)--cycle;
		\draw[thick]	(2,2)--(3,2)--(3,3)--(2,3)--cycle;	
		\draw[thick]	(1,4)--(2,4)--(2,5)--(1,5)--cycle;
		\draw[thick]	(0.33,1)--(0.33,2);
		\draw[thick]	(2.66,1)--(2.66,2);
		\draw[thick]	(0.5,3)--(1.33,4);
		\draw[thick]	(2.5,3)--(1.66,4);	
		\draw (1.5,4.5) node {{$1$}} ; 
		\draw (0.5,2.5) node {{$2$}} ; 
		\draw (2.5,2.5) node {{$3$}} ; 	
		\draw (0.5,0.5) node {{$4$}} ; 
		\draw (2.5,0.5) node {{$5$}} ; 	
	\end{tikzpicture}
	\caption{The first  directed simple graphs.}
	\label{Fig:SimpleGraph}
\end{figure*}

The set of directed simple graphs is denoted by $\dcGra$ and we endow its linear span with the partial composition products $\gra_1 \circ_i \gra_2$ defined by the sum of directed simple graphs obtained by first inserting the graph $\gra_2$ at the vertex $i$ of the graph $\gra_1$, by shifting the indices of the vertices, and by replacing the edges between the vertices $j$ and the vertex $i$ in $\gra_1$ by all the possible ways to connect the vertices $j$ to the vertices of $\gra_2$, see \cref{Fig:PartCompo}. 

\begin{figure*}[h!]
	\begin{align*}
		\vcenter{\hbox{\begin{tikzpicture}[scale=0.6]
					\draw[thick]	(0,0)--(1,0)--(1,1)--(0,1)--cycle;
					\draw[thick]	(2,0)--(3,0)--(3,1)--(2,1)--cycle;
					\draw[thick]	(1,2)--(2,2)--(2,3)--(1,3)--cycle;
					\draw[thick]	(0.5,1)--(1.33,2);
					\draw[thick]	(2.5,1)--(1.66,2);
					\draw (1.5,2.5) node {{$2$}} ; 
					\draw (0.5,0.5) node {{$1$}} ; 
					\draw (2.5,0.5) node {{$3$}} ; 	
		\end{tikzpicture}}}
		& \ \circ_2\ 
		\vcenter{\hbox{
				\begin{tikzpicture}[scale=0.6]
					\draw[thick]	(0,0)--(1,0)--(1,1)--(0,1)--cycle;
					\draw[thick]	(0,2)--(1,2)--(1,3)--(0,3)--cycle;
					\draw[thick]	(0.5,1)--(0.5,2);
					\draw (0.5,0.5) node {{$2$}} ; 
					\draw (0.5,2.5) node {{$1$}} ; 
				\end{tikzpicture}
		}} = \ 
		\vcenter{\hbox{\begin{tikzpicture}[scale=0.6]
					\draw[thick] (0.66,1)--(1.33,2);
					\draw[thick] (2.33,1)--(1.66,2);
					\draw[thick]	(0,0)--(1,0)--(1,1)--(0,1)--cycle;
					\draw[thick]	(2,0)--(3,0)--(3,1)--(2,1)--cycle;
					\draw[thick]	(1,2)--(2,2)--(2,3)--(1,3)--cycle;
					\draw[thick]	(1,4)--(2,4)--(2,5)--(1,5)--cycle;	
					\draw[thick]	(1.5,3)--(1.5,4);	
					\draw (1.5,4.5) node {{$2$}} ; 
					\draw (1.5,2.5) node {{$3$}} ; 
					\draw (0.5,0.5) node {{$1$}} ; 
					\draw (2.5,0.5) node {{$4$}} ; 	
		\end{tikzpicture}}}
		\ + \ \vcenter{\hbox{\begin{tikzpicture}[scale=0.6]
					\draw[thick] (0.66,1)--(1.33,2);
					\draw[thick] (2.66,1) to (2.66,2.5) to[out=90,in=300]  (1.66,4);
					\draw[thick]	(0,0)--(1,0)--(1,1)--(0,1)--cycle;
					\draw[thick]	(2,0)--(3,0)--(3,1)--(2,1)--cycle;
					\draw[thick]	(1,2)--(2,2)--(2,3)--(1,3)--cycle;
					\draw[thick]	(1,4)--(2,4)--(2,5)--(1,5)--cycle;	
					\draw[thick]	(1.5,3)--(1.5,4);	
					\draw (1.5,4.5) node {{$2$}} ; 
					\draw (1.5,2.5) node {{$3$}} ; 
					\draw (0.5,0.5) node {{$1$}} ; 
					\draw (2.5,0.5) node {{$4$}} ; 	
		\end{tikzpicture}}}
		\ + \ \vcenter{\hbox{\begin{tikzpicture}[scale=0.6]
					\draw[thick] (0.33,1) to (0.33, 2.5) to[out=90,in=240] (1.33,4);
					\draw[thick] (2.33,1)--(1.66,2);
					\draw[thick]	(0,0)--(1,0)--(1,1)--(0,1)--cycle;
					\draw[thick]	(2,0)--(3,0)--(3,1)--(2,1)--cycle;
					\draw[thick]	(1,2)--(2,2)--(2,3)--(1,3)--cycle;
					\draw[thick]	(1,4)--(2,4)--(2,5)--(1,5)--cycle;	
					\draw[thick]	(1.5,3)--(1.5,4);	
					\draw (1.5,4.5) node {{$2$}} ; 
					\draw (1.5,2.5) node {{$3$}} ; 
					\draw (0.5,0.5) node {{$1$}} ; 
					\draw (2.5,0.5) node {{$4$}} ; 	
		\end{tikzpicture}}}
		\ + \ \vcenter{\hbox{\begin{tikzpicture}[scale=0.6]
					\draw[thick] (0.33,1) to (0.33, 2.5) to[out=90,in=240] (1.33,4);
					\draw[thick] (2.66,1) to (2.66,2.5) to[out=90,in=300]  (1.66,4);
					\draw[thick]	(0,0)--(1,0)--(1,1)--(0,1)--cycle;
					\draw[thick]	(2,0)--(3,0)--(3,1)--(2,1)--cycle;
					\draw[thick]	(1,2)--(2,2)--(2,3)--(1,3)--cycle;
					\draw[thick]	(1,4)--(2,4)--(2,5)--(1,5)--cycle;	
					\draw[thick]	(1.5,3)--(1.5,4);	
					\draw (1.5,4.5) node {{$2$}} ; 
					\draw (1.5,2.5) node {{$3$}} ; 
					\draw (0.5,0.5) node {{$1$}} ; 
					\draw (2.5,0.5) node {{$4$}} ; 	
		\end{tikzpicture}}} \\ &
		\ + \ \vcenter{\hbox{\begin{tikzpicture}[scale=0.6]
					\draw[thick] (0.66,1)--(1.33,2);
					\draw[thick] (0.33,1) to (0.33, 2.5) to[out=90,in=240] (1.33,4);
					\draw[thick] (2.33,1)--(1.66,2);
					\draw[thick]	(0,0)--(1,0)--(1,1)--(0,1)--cycle;
					\draw[thick]	(2,0)--(3,0)--(3,1)--(2,1)--cycle;
					\draw[thick]	(1,2)--(2,2)--(2,3)--(1,3)--cycle;
					\draw[thick]	(1,4)--(2,4)--(2,5)--(1,5)--cycle;	
					\draw[thick]	(1.5,3)--(1.5,4);	
					\draw (1.5,4.5) node {{$2$}} ; 
					\draw (1.5,2.5) node {{$3$}} ; 
					\draw (0.5,0.5) node {{$1$}} ; 
					\draw (2.5,0.5) node {{$4$}} ; 	
		\end{tikzpicture}}}
		\ + \ \vcenter{\hbox{\begin{tikzpicture}[scale=0.6]
					\draw[thick] (0.66,1)--(1.33,2);
					\draw[thick] (0.33,1) to (0.33, 2.5) to[out=90,in=240] (1.33,4);
					\draw[thick] (2.66,1) to (2.66,2.5) to[out=90,in=300]  (1.66,4);
					\draw[thick]	(0,0)--(1,0)--(1,1)--(0,1)--cycle;
					\draw[thick]	(2,0)--(3,0)--(3,1)--(2,1)--cycle;
					\draw[thick]	(1,2)--(2,2)--(2,3)--(1,3)--cycle;
					\draw[thick]	(1,4)--(2,4)--(2,5)--(1,5)--cycle;	
					\draw[thick]	(1.5,3)--(1.5,4);	
					\draw (1.5,4.5) node {{$2$}} ; 
					\draw (1.5,2.5) node {{$3$}} ; 
					\draw (0.5,0.5) node {{$1$}} ; 
					\draw (2.5,0.5) node {{$4$}} ; 	
		\end{tikzpicture}}}
		\ + \ \vcenter{\hbox{\begin{tikzpicture}[scale=0.6]
					\draw[thick] (0.66,1)--(1.33,2);
					\draw[thick] (2.33,1)--(1.66,2);
					\draw[thick] (2.66,1) to (2.66,2.5) to[out=90,in=300]  (1.66,4);
					\draw[thick]	(0,0)--(1,0)--(1,1)--(0,1)--cycle;
					\draw[thick]	(2,0)--(3,0)--(3,1)--(2,1)--cycle;
					\draw[thick]	(1,2)--(2,2)--(2,3)--(1,3)--cycle;
					\draw[thick]	(1,4)--(2,4)--(2,5)--(1,5)--cycle;	
					\draw[thick]	(1.5,3)--(1.5,4);	
					\draw (1.5,4.5) node {{$2$}} ; 
					\draw (1.5,2.5) node {{$3$}} ; 
					\draw (0.5,0.5) node {{$1$}} ; 
					\draw (2.5,0.5) node {{$4$}} ; 	
		\end{tikzpicture}}}
		\ + \ \vcenter{\hbox{\begin{tikzpicture}[scale=0.6]
					\draw[thick] (0.33,1) to (0.33, 2.5) to[out=90,in=240] (1.33,4);
					\draw[thick] (2.33,1)--(1.66,2);
					\draw[thick] (2.66,1) to (2.66,2.5) to[out=90,in=300]  (1.66,4);
					\draw[thick]	(0,0)--(1,0)--(1,1)--(0,1)--cycle;
					\draw[thick]	(2,0)--(3,0)--(3,1)--(2,1)--cycle;
					\draw[thick]	(1,2)--(2,2)--(2,3)--(1,3)--cycle;
					\draw[thick]	(1,4)--(2,4)--(2,5)--(1,5)--cycle;	
					\draw[thick]	(1.5,3)--(1.5,4);	
					\draw (1.5,4.5) node {{$2$}} ; 
					\draw (1.5,2.5) node {{$3$}} ; 
					\draw (0.5,0.5) node {{$1$}} ; 
					\draw (2.5,0.5) node {{$4$}} ; 	
		\end{tikzpicture}}}
		\ + \ \vcenter{\hbox{\begin{tikzpicture}[scale=0.6]
					\draw[thick] (0.66,1)--(1.33,2);
					\draw[thick] (0.33,1) to (0.33, 2.5) to[out=90,in=240] (1.33,4);
					\draw[thick] (2.33,1)--(1.66,2);
					\draw[thick] (2.66,1) to (2.66,2.5) to[out=90,in=300]  (1.66,4);
					\draw[thick]	(0,0)--(1,0)--(1,1)--(0,1)--cycle;
					\draw[thick]	(2,0)--(3,0)--(3,1)--(2,1)--cycle;
					\draw[thick]	(1,2)--(2,2)--(2,3)--(1,3)--cycle;
					\draw[thick]	(1,4)--(2,4)--(2,5)--(1,5)--cycle;	
					\draw[thick]	(1.5,3)--(1.5,4);	
					\draw (1.5,4.5) node {{$2$}} ; 
					\draw (1.5,2.5) node {{$3$}} ; 
					\draw (0.5,0.5) node {{$1$}} ; 
					\draw (2.5,0.5) node {{$4$}} ; 	
		\end{tikzpicture}}}
	\end{align*}
	\caption{Example of a partial composition in the operad $\Liegra$.}
	\label{Fig:PartCompo}
\end{figure*}
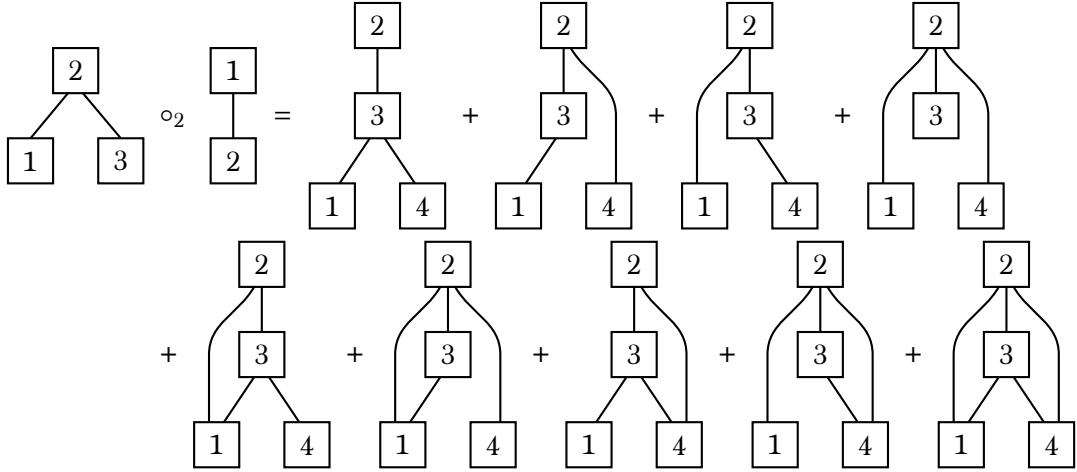

\begin{definition}[Lie-graph algebra]
	A \emph{Lie-graph algebra} is an algebra over the operad 
	\[\Liegra\coloneq (\kdcGra, \{\circ_i\}, \vcenter{\hbox{\begin{tikzpicture}[scale=0.3]
				\draw[thick]	(0,0)--(1,0)--(1,1)--(0,1)--cycle;
				\draw (0.5,0.5) node {{\tiny$1$}} ; 
	\end{tikzpicture}}} 
	)~.\]
\end{definition}

In any Lie-graph algebra, the binary product, denoted by $\star$, corresponding to the graph with two vertices is Lie-admissible, i.e. its skew-symmetrization produces a Lie bracket. So the Maurer--Cartan equation of dg Lie-graph algebras is 
\[\d\alpha +\alpha \star \alpha =0~.\]
The dimensions of the components of the Lie-graph operad are strictly greater than the ones of the Lie-admissible operad: this means that the iterations of the binary product $\star$ of a Lie-graph algebra fail to recover all the operations of this algebraic notion. 
Even worse, contrary to the operads $\Lie$, $\mathrm{Ass}$, $\Prelie$, $\mathrm{Lie}\textrm{-}\mathrm{adm}$ the operad $\Liegra$ is not finitely generated. 
This implies that we cannot bypass operad theory to even define $\Liegra$-algebras by means of a finite set of structural operations satisfying some relations.

\medskip

The projection of directed simple graphs onto rooted trees and the projection of rooted trees onto ladders define two surjective morphisms of operads
$$\Lie \rightarrowtail \Lieadm \to \Liegra \twoheadrightarrow \Prelie \twoheadrightarrow \mathrm{Ass}~.  $$
On the level of categories of algebras, this means that 
 pre-Lie algebras are  Lie-graph algebras whose operations vanish for graphs different from rooted trees
and associative algebras are pre-Lie algebras whose operations vanish for rooted trees  different from ladders. 

\begin{definition}[Graph circle product]
Given any complete dg Lie-graph algebra 
$(A,  \d, \F, \{\gra\}_{\gra\in \dcGra})$, we consider the set of formal group-like elements
\[\1+\F_1A_0\coloneq \{\1+x, \, x \in \F_1A_0\}~,\]
where $\1$ is a formal symbol. 
We equip it with the following \emph{graph circle product}:
\[
(\1+x)\circledcirc (\1+y)\coloneq \1 +\ \sum_{\gra \in {\ldcGra}_{\Sy}} {\textstyle \frac{1}{|\Aut(\gra)|}}\,  \gra(x,y)\ , 
\]
where $\gra(x,y)$ stands for the Lie-graph action of
	$2${-leveled directed simple graphs} 
 $\gra$ with bottom vertices labelled by $x$ and top vertices labelled by $y$.
\end{definition}

\begin{figure*}[h]
	\[(\1+x)\circledcirc (\1+y)\ =\ \1 \  + \
	\vcenter{\hbox{\begin{tikzpicture}[scale=0.4]
				\draw[thick]	(0,0)--(1,0)--(1,1)--(0,1)--cycle;
				\draw (0.5,0.5) node {{$x$}} ; 
	\end{tikzpicture}}}
	\ + \
	\vcenter{\hbox{\begin{tikzpicture}[scale=0.4]
				\draw[thick]	(0,0)--(1,0)--(1,1)--(0,1)--cycle;
				\draw (0.5,0.5) node {{$y$}} ; 
	\end{tikzpicture}}}
	\ + \
	\vcenter{\hbox{\begin{tikzpicture}[scale=0.4]
				\draw[thick]	(0,0)--(1,0)--(1,1)--(0,1)--cycle;
				\draw[thick]	(0,2)--(1,2)--(1,3)--(0,3)--cycle;
				\draw[thick]	(0.5,1)--(0.5,2);
				\draw (0.5,0.5) node {{$x$}} ; 
				\draw (0.5,2.5) node {{$y$}} ; 
	\end{tikzpicture}}}
	\ + \
	\scalebox{1.1}{$\frac{1}{2}$}\ 
	\vcenter{\hbox{\begin{tikzpicture}[scale=0.4]
				\draw[thick]	(0,0)--(1,0)--(1,1)--(0,1)--cycle;
				\draw[thick]	(2,0)--(3,0)--(3,1)--(2,1)--cycle;
				\draw[thick]	(1,2)--(2,2)--(2,3)--(1,3)--cycle;
				\draw[thick]	(0.5,1)--(1.33,2);
				\draw[thick]	(2.5,1)--(1.66,2);
				\draw (1.5,2.5) node {{$y$}} ; 
				\draw (0.5,0.5) node {{$x$}} ; 
				\draw (2.5,0.5) node {{$x$}} ; 	
	\end{tikzpicture}}}
	\ + \ 
	\scalebox{1.1}{$\frac{1}{2}$}\ 
	\vcenter{\hbox{\begin{tikzpicture}[scale=0.4]
				\draw[thick]	(0,2)--(1,2)--(1,3)--(0,3)--cycle;
				\draw[thick]	(2,2)--(3,2)--(3,3)--(2,3)--cycle;
				\draw[thick]	(1,0)--(2,0)--(2,1)--(1,1)--cycle;
				\draw[thick]	(0.5,2)--(1.33,1);
				\draw[thick]	(2.5,2)--(1.66,1);
				\draw (1.5,0.5) node {{$x$}} ; 
				\draw (0.5,2.5) node {{$y$}} ; 
				\draw (2.5,2.5) node {{$y$}} ; 	
	\end{tikzpicture}}}
	\ + \ 
	\scalebox{1.1}{$\frac{1}{4}$}\ 
	\vcenter{\hbox{\begin{tikzpicture}[scale=0.4]
				\draw[thick]	(0.66,2)--(2.33,1);	
				\draw[draw=white,double=black,double distance=2*\pgflinewidth,thick]	(0.66,1)--(2.33,2);		
				\draw[thick]	(0,0)--(1,0)--(1,1)--(0,1)--cycle;
				\draw[thick]	(2,0)--(3,0)--(3,1)--(2,1)--cycle;
				\draw[thick]	(0,2)--(1,2)--(1,3)--(0,3)--cycle;
				\draw[thick]	(2,2)--(3,2)--(3,3)--(2,3)--cycle;	
				\draw[thick]	(0.33,1)--(0.33,2);
				\draw[thick]	(2.66,1)--(2.66,2);
				\draw (0.5,2.5) node {{$y$}} ; 
				\draw (2.5,2.5) node {{$y$}} ; 	
				\draw (0.5,0.5) node {{$x$}} ; 
				\draw (2.5,0.5) node {{$x$}} ; 	
	\end{tikzpicture}}}
	\ + \ \cdots\]
	\caption{The first terms of the graph circle product $\cc$\ .}
	\label{Fig:GrpProd}
\end{figure*}

\begin{definition}[Graph exponential and logarithm maps]
	For any complete dg Lie-graph algebra, the \emph{graph exponential map} is defined by 
	\[ \begin{array}{lccc}
		\exp\ : & \F_1 A_0 & \to &\1 +\F_1 A_0\\
		&\lambda&\mapsto& \displaystyle\1 + \sum_{\gra \in \dcGra_{\Sy}} \frac{\ell_\gra}{|\gra|!}\,  \gra(\lambda)\ ,
	\end{array} \]
where $\ell_\gra$ is equal to the number of ways to place the vertices of the graph $\gra$ on $|\gra|$ levels. 	
It admits an inverse map called the  the \emph{graph logarithm map}:
$\ln\ :  \1+\F_1 A_0 \to  \F_1 A_0$~. 
\begin{figure*}[h]
	\[\exp(\lambda)\ =\ \1 \  + \
	\vcenter{\hbox{\begin{tikzpicture}[scale=0.4]
				\draw[thick]	(0,0)--(1,0)--(1,1)--(0,1)--cycle;
				\draw (0.5,0.5) node {{$\lambda$}} ; 
	\end{tikzpicture}}}
	\ + \
	\scalebox{1.1}{$\frac{1}{2}$}\ 
	\vcenter{\hbox{\begin{tikzpicture}[scale=0.4]
				\draw[thick]	(0,0)--(1,0)--(1,1)--(0,1)--cycle;
				\draw[thick]	(0,2)--(1,2)--(1,3)--(0,3)--cycle;
				\draw[thick]	(0.5,1)--(0.5,2);
				\draw (0.5,0.5) node {{$\lambda$}} ; 
				\draw (0.5,2.5) node {{$\lambda$}} ; 
	\end{tikzpicture}}}
	\ + \
	\scalebox{1.1}{$\frac{1}{6}$}\ 
	\vcenter{\hbox{\begin{tikzpicture}[scale=0.4]
				\draw[thick]	(0,0)--(1,0)--(1,1)--(0,1)--cycle;
				\draw[thick]	(2,0)--(3,0)--(3,1)--(2,1)--cycle;
				\draw[thick]	(1,2)--(2,2)--(2,3)--(1,3)--cycle;
				\draw[thick]	(0.5,1)--(1.33,2);
				\draw[thick]	(2.5,1)--(1.66,2);
				\draw (1.5,2.5) node {{$\lambda$}} ; 
				\draw (0.5,0.5) node {{$\lambda$}} ; 
				\draw (2.5,0.5) node {{$\lambda$}} ; 	
	\end{tikzpicture}}}
	\ + \ 
	\scalebox{1.1}{$\frac{1}{6}$}\ 
	\vcenter{\hbox{\begin{tikzpicture}[scale=0.4]
				\draw[thick]	(0,2)--(1,2)--(1,3)--(0,3)--cycle;
				\draw[thick]	(2,2)--(3,2)--(3,3)--(2,3)--cycle;
				\draw[thick]	(1,0)--(2,0)--(2,1)--(1,1)--cycle;
				\draw[thick]	(0.5,2)--(1.33,1);
				\draw[thick]	(2.5,2)--(1.66,1);
				\draw (1.5,0.5) node {{$\lambda$}} ; 
				\draw (0.5,2.5) node {{$\lambda$}} ; 
				\draw (2.5,2.5) node {{$\lambda$}} ; 	
	\end{tikzpicture}}}
	\ + \ 
	\scalebox{1.1}{$\frac{1}{6}$}\ 
	\vcenter{\hbox{\begin{tikzpicture}[scale=0.4]
				\draw[thick]	(0,0)--(1,0)--(1,1)--(0,1)--cycle;
				\draw[thick]	(0,2)--(1,2)--(1,3)--(0,3)--cycle;
				\draw[thick]	(0,4)--(1,4)--(1,5)--(0,5)--cycle;
				\draw[thick]	(0.5,1)--(0.5,2);
				\draw[thick]	(0.5,3)--(0.5,4);	
				\draw (0.5,0.5) node {{$\lambda$}} ; 
				\draw (0.5,2.5) node {{$\lambda$}} ; 
				\draw (0.5,4.5) node {{$\lambda$}} ; 
	\end{tikzpicture}}}
	\ + \ 
	\scalebox{1.1}{$\frac{1}{6}$}\ 
	\vcenter{\hbox{\begin{tikzpicture}[scale=0.4]
				\draw[thick]	(0,0)--(1,0)--(1,1)--(0,1)--cycle;
				\draw[thick]	(0,2)--(1,2)--(1,3)--(0,3)--cycle;
				\draw[thick]	(0,4)--(1,4)--(1,5)--(0,5)--cycle;
				\draw[thick]	(0.33,1)--(0.33,2);
				\draw[thick]	(0.33,3)--(0.33,4);
				\draw[thick]	(0.66,1) to[out=60,in=270] (1.5,2.5) to[out=90,in=300] (0.66,4);	
				\draw (0.5,0.5) node {{$\lambda$}} ; 
				\draw (0.5,2.5) node {{$\lambda$}} ; 
				\draw (0.5,4.5) node {{$\lambda$}} ; 
	\end{tikzpicture}}}
	\ + \
	\scalebox{1.1}{$\frac{1}{8}$}\ 
	\vcenter{\hbox{\begin{tikzpicture}[scale=0.4]
				\draw[thick]	(0,0)--(1,0)--(1,1)--(0,1)--cycle;
				\draw[thick]	(2,0)--(3,0)--(3,1)--(2,1)--cycle;
				\draw[thick]	(1,2)--(2,2)--(2,3)--(1,3)--cycle;
				\draw[thick]	(0,-2)--(1,-2)--(1,-1)--(0,-1)--cycle;	
				\draw[thick]	(0.5,1)--(1.33,2);
				\draw[thick]	(2.5,1)--(1.66,2);
				\draw[thick]	(0.5,0)--(0.5,-1);	
				\draw (0.5,-1.5) node {{$\lambda$}} ; 
				\draw (1.5,2.5) node {{$\lambda$}} ; 
				\draw (0.5,0.5) node {{$\lambda$}} ; 
				\draw (2.5,0.5) node {{$\lambda$}} ; 	
	\end{tikzpicture}}}
	\ + \ 
	\scalebox{1.1}{$\frac{1}{24}$}\ 
	\vcenter{\hbox{\begin{tikzpicture}[scale=0.4]
				\draw[thick]	(0.66,2)--(2.33,1);	
				\draw[draw=white,double=black,double distance=2*\pgflinewidth,thick]	(0.66,1)--(2.33,2);		
				\draw[thick]	(0,0)--(1,0)--(1,1)--(0,1)--cycle;
				\draw[thick]	(2,0)--(3,0)--(3,1)--(2,1)--cycle;
				\draw[thick]	(0,2)--(1,2)--(1,3)--(0,3)--cycle;
				\draw[thick]	(2,2)--(3,2)--(3,3)--(2,3)--cycle;	
				\draw[thick]	(0.33,1)--(0.33,2);
				\draw[thick]	(2.66,1)--(2.66,2);
				\draw (0.5,2.5) node {{$\lambda$}} ; 
				\draw (2.5,2.5) node {{$\lambda$}} ; 	
				\draw (0.5,0.5) node {{$\lambda$}} ; 
				\draw (2.5,0.5) node {{$\lambda$}} ; 	
	\end{tikzpicture}}}
	\ + \ \cdots\]
	\[\ln(\1+\lambda)\ =
	\vcenter{\hbox{\begin{tikzpicture}[scale=0.4]
				\draw[thick]	(0,0)--(1,0)--(1,1)--(0,1)--cycle;
				\draw (0.5,0.5) node {{$\lambda$}} ; 
	\end{tikzpicture}}}
	\ - \
	\scalebox{1.1}{$\frac{1}{2}$}\ 
	\vcenter{\hbox{\begin{tikzpicture}[scale=0.4]
				\draw[thick]	(0,0)--(1,0)--(1,1)--(0,1)--cycle;
				\draw[thick]	(0,2)--(1,2)--(1,3)--(0,3)--cycle;
				\draw[thick]	(0.5,1)--(0.5,2);
				\draw (0.5,0.5) node {{$\lambda$}} ; 
				\draw (0.5,2.5) node {{$\lambda$}} ; 
	\end{tikzpicture}}}
	\ + \
	\scalebox{1.1}{$\frac{1}{12}$}\ 
	\vcenter{\hbox{\begin{tikzpicture}[scale=0.4]
				\draw[thick]	(0,0)--(1,0)--(1,1)--(0,1)--cycle;
				\draw[thick]	(2,0)--(3,0)--(3,1)--(2,1)--cycle;
				\draw[thick]	(1,2)--(2,2)--(2,3)--(1,3)--cycle;
				\draw[thick]	(0.5,1)--(1.33,2);
				\draw[thick]	(2.5,1)--(1.66,2);
				\draw (1.5,2.5) node {{$\lambda$}} ; 
				\draw (0.5,0.5) node {{$\lambda$}} ; 
				\draw (2.5,0.5) node {{$\lambda$}} ; 	
	\end{tikzpicture}}}
	\ + \ 
	\scalebox{1.1}{$\frac{1}{12}$}\ 
	\vcenter{\hbox{\begin{tikzpicture}[scale=0.4]
				\draw[thick]	(0,2)--(1,2)--(1,3)--(0,3)--cycle;
				\draw[thick]	(2,2)--(3,2)--(3,3)--(2,3)--cycle;
				\draw[thick]	(1,0)--(2,0)--(2,1)--(1,1)--cycle;
				\draw[thick]	(0.5,2)--(1.33,1);
				\draw[thick]	(2.5,2)--(1.66,1);
				\draw (1.5,0.5) node {{$\lambda$}} ; 
				\draw (0.5,2.5) node {{$\lambda$}} ; 
				\draw (2.5,2.5) node {{$\lambda$}} ; 	
	\end{tikzpicture}}}
	\ + \ 
	\scalebox{1.1}{$\frac{1}{3}$}\ 
	\vcenter{\hbox{\begin{tikzpicture}[scale=0.4]
				\draw[thick]	(0,0)--(1,0)--(1,1)--(0,1)--cycle;
				\draw[thick]	(0,2)--(1,2)--(1,3)--(0,3)--cycle;
				\draw[thick]	(0,4)--(1,4)--(1,5)--(0,5)--cycle;
				\draw[thick]	(0.5,1)--(0.5,2);
				\draw[thick]	(0.5,3)--(0.5,4);	
				\draw (0.5,0.5) node {{$\lambda$}} ; 
				\draw (0.5,2.5) node {{$\lambda$}} ; 
				\draw (0.5,4.5) node {{$\lambda$}} ; 
	\end{tikzpicture}}}
	\ + \ 
	\scalebox{1.1}{$\frac{1}{3}$}\ 
	\vcenter{\hbox{\begin{tikzpicture}[scale=0.4]
				\draw[thick]	(0,0)--(1,0)--(1,1)--(0,1)--cycle;
				\draw[thick]	(0,2)--(1,2)--(1,3)--(0,3)--cycle;
				\draw[thick]	(0,4)--(1,4)--(1,5)--(0,5)--cycle;
				\draw[thick]	(0.33,1)--(0.33,2);
				\draw[thick]	(0.33,3)--(0.33,4);
				\draw[thick]	(0.66,1) to[out=60,in=270] (1.5,2.5) to[out=90,in=300] (0.66,4);	
				\draw (0.5,0.5) node {{$\lambda$}} ; 
				\draw (0.5,2.5) node {{$\lambda$}} ; 
				\draw (0.5,4.5) node {{$\lambda$}} ; 
	\end{tikzpicture}}}
	\ + \ \cdots\]
	\caption{The first terms of the graph exponential and logarithm maps.}
\end{figure*}
\end{definition}

\begin{theorem}[\cite{CamposVallette24}]\label{thm:Liegraph}
Under the graph exponential and logarithm maps, the gauge group associated to 
any complete dg Lie-graph algebra is isomorphic to the \emph{deformation gauge group} made up of formal group-like elements equipped with the graph circle product:
$$
\Gamma\coloneq\left(\F_1A_0, \BCH, 0
\right)\cong  
\left(\1+\F_1A_0, \circledcirc, 1\right) \eqcolon \G
\ .
$$
\end{theorem}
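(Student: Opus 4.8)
\section*{Proof proposal}

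The plan is to split the statement into three independent tasks: making $\G=(\1+\F_1A_0,\circledcirc,\1)$ into a group, showing that the graph exponential is a bijection (so that the graph logarithm is its honest inverse), and showing that $\exp$ carries the $\BCH$ product to the graph circle product. The group structure on $\G$ is the least delicate part. The unit is $\1$, and by completeness of the filtration every element $\1+x$ with $x\in\F_1A_0$ admits a two-sided $\circledcirc$-inverse, computed by the recursion forced by $(\1+x)\circledcirc(\1+x)^{-1}=\1$ (each approximation correcting the previous one in a strictly higher filtration layer). So the only substantial point is associativity of $\circledcirc$. I would deduce this from the operad structure of $\Liegra$: both $\big((\1+x)\circledcirc(\1+y)\big)\circledcirc(\1+z)$ and $(\1+x)\circledcirc\big((\1+y)\circledcirc(\1+z)\big)$ expand as one and the same sum over $3$-leveled directed simple graphs weighted by $1/|\Aut(\gra)|$, the two bracketings corresponding to the two ways of collapsing adjacent levels; the equality then reduces to associativity of the partial compositions $\circ_i$ together with a careful bookkeeping of the automorphism factors produced when levels merge.

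Next I would check that $\exp\colon\F_1A_0\to\1+\F_1A_0$ is a bijection, which is a pure filtration argument: since $\exp(\lambda)=\1+\lambda+(\text{strictly higher filtration})$, the map is filtered, is the identity on the linear layer, and raises filtration on all higher layers, hence is invertible by completeness. The graph logarithm $\ln$ is then well defined precisely as this inverse, so no separate verification of the displayed formula for $\ln$ is needed beyond recording its leading coefficients.

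The heart is that $\exp$ is a group morphism $\Gamma\to\G$, that is $\exp(\BCH(\lambda,\mu))=\exp(\lambda)\circledcirc\exp(\mu)$. I would first reduce everything to the universal complete dg Lie-graph algebra, where the identity becomes a purely combinatorial statement about graphs and their levelings and hence specialises to every Lie-graph algebra at once. There I would establish the one-parameter subgroup identity $\exp(s\lambda)\circledcirc\exp(t\lambda)=\exp\big((s+t)\lambda\big)$: expanding the left-hand side over $2$-leveled graphs and regrouping by the total underlying graph $\gra$, the splitting of $\gra$ into a bottom part carrying $s$ and a top part carrying $t$ contributes a binomial sum $\sum_k\binom{|\gra|}{k}s^kt^{|\gra|-k}=(s+t)^{|\gra|}$ once the normalisation $\ell_\gra/|\gra|!$ is accounted for, the integer $\ell_\gra$ being engineered exactly so that leveling and then re-splitting levels is consistent. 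In parallel I would compute the tangent Lie algebra of $\G$ at $\1$: the bilinear part of the $\circledcirc$-commutator $(\1+\epsilon x)\circledcirc(\1+\delta y)\circledcirc(\1+\epsilon x)^{-1}\circledcirc(\1+\delta y)^{-1}$ picks out precisely the two ladder terms $x\star y$ and $y\star x$ with opposite signs, yielding $[x,y]$, the skew-symmetrisation of $\star$. The one-parameter subgroup property identifies the graph $\exp$ with the abstract exponential of the pro-unipotent group $\G$, and the characterisation $\exp(\ad_{\BCH(\lambda,\mu)})=\exp(\ad_\lambda)\circ\exp(\ad_\mu)$ of \cite[Proposition~5.14]{Robert-NicoudVallette20} for this bracket then forces $\exp(\lambda)\circledcirc\exp(\mu)=\exp(\BCH(\lambda,\mu))$, completing the isomorphism.

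The main obstacle I expect is exactly the combinatorial one-parameter subgroup identity, equivalently the morphism property of $\exp$: this is where the automorphism weights $1/|\Aut(\gra)|$ of $\circledcirc$ and the level-counting coefficients $\ell_\gra/|\gra|!$ of $\exp$ must be reconciled term by term over all levelings of each directed simple graph. Because $\star$ is only Lie-admissible, neither associative nor pre-Lie, none of the shortcuts available in the associative or pre-Lie cases apply, and all the graph operations of $\Liegra$ rather than mere iterates of $\star$ genuinely enter; carrying out the verification in the universal Lie-graph algebra is what keeps this accounting tractable.
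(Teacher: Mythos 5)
Your proposal is correct and follows essentially the same route as the paper's proof (detailed in \cite{CamposVallette24}): reduce to universal formulas in the free complete Lie-graph algebra on two generators, then apply classical Lie theory to the filtration truncations --- your pro-unipotent group / one-parameter subgroup argument is precisely the paper's identification of the graph exponential map with the classical Lie-theoretic exponential of the truncated (finite-dimensional, real) Lie algebras and their Lie groups. The additional details you supply (associativity of $\circledcirc$ via $3$-leveled graphs, the tangent-space computation giving the skew-symmetrisation of $\star$, and the appeal to the characterisation $\exp(\ad_{\BCH(x,y)})=\exp(\ad_x)\circ\exp(\ad_y)$ of \cite{Robert-NicoudVallette20}) are consistent with how the cited proof fills in this sketch.
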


\begin{proof}[Sketch of proof]
The detailed proof can be found in \cite[Section~2]{CamposVallette24}. The first idea amounts to work in the free complete Lie-graph algebra on two generators with rational coefficients in order to produce universal formulas. The second main idea amounts to apply the classical Lie theory: viewed with real coefficients and truncated modulo any layer of the complete filtration, one gets a finite dimensional real Lie algebra and its associated Lie group. Finally, the graph exponential map is showed to be the classical Lie theoretical exponential map between them. 
\end{proof}

It remains to make effective the formula for the action of this deformation gauge group on Maurer--Cartan elements. 

\begin{definition}[Bowtie element]
	For any elements $x,y \in\F_1A_0$ and $\alpha\in A$ in a complete Lie-graph algebra, we consider the associated \emph{bowtie element}
	of $A$~:
	\[\pap{(\1+x)}{\alpha}{(\1+y)}\coloneq \sum_{\gra \in \btGra} {\textstyle \frac{1}{|\Aut(\gra)|}} \, \gra(x,\alpha,y)\ ,\]
where the sum runs over the actions of  
3-leveled directed simple graphs with only one vertex on the middle level labelled by $\alpha$ and (possibly empty) top and bottom levels (labelled respectively by $y$ and $x$). 
	\begin{figure*}[h]
		\begin{align*}
			\pap{(\1+x)}{\alpha}{(\1+y)}\ = \ &
			\vcenter{\hbox{\begin{tikzpicture}[scale=0.4]
						\draw[thick]	(0,0)--(1,0)--(1,1)--(0,1)--cycle;
						\draw (0.5,0.5) node {{$\alpha$}} ; 
			\end{tikzpicture}}}
			\ + \
			\vcenter{\hbox{\begin{tikzpicture}[scale=0.4]
						\draw[thick]	(0,0)--(1,0)--(1,1)--(0,1)--cycle;
						\draw[thick]	(0,2)--(1,2)--(1,3)--(0,3)--cycle;
						\draw[thick]	(0.5,1)--(0.5,2);
						\draw (0.5,0.5) node {{$x$}} ; 
						\draw (0.5,2.5) node {{$\alpha$}} ; 
			\end{tikzpicture}}}
			\ + \
			\vcenter{\hbox{\begin{tikzpicture}[scale=0.4]
						\draw[thick]	(0,0)--(1,0)--(1,1)--(0,1)--cycle;
						\draw[thick]	(0,2)--(1,2)--(1,3)--(0,3)--cycle;
						\draw[thick]	(0.5,1)--(0.5,2);
						\draw (0.5,0.5) node {{$\alpha$}} ; 
						\draw (0.5,2.5) node {{$y$}} ; 
			\end{tikzpicture}}}
			\ + \
			\scalebox{1.1}{$\frac{1}{2}$}\ 
			\vcenter{\hbox{\begin{tikzpicture}[scale=0.4]
						\draw[thick]	(0,0)--(1,0)--(1,1)--(0,1)--cycle;
						\draw[thick]	(2,0)--(3,0)--(3,1)--(2,1)--cycle;
						\draw[thick]	(1,2)--(2,2)--(2,3)--(1,3)--cycle;
						\draw[thick]	(0.5,1)--(1.33,2);
						\draw[thick]	(2.5,1)--(1.66,2);
						\draw (1.5,2.5) node {{$\alpha$}} ; 
						\draw (0.5,0.5) node {{$x$}} ; 
						\draw (2.5,0.5) node {{$x$}} ; 	
			\end{tikzpicture}}}
			\ + \ 
			\scalebox{1.1}{$\frac{1}{2}$}\ 
			\vcenter{\hbox{\begin{tikzpicture}[scale=0.4]
						\draw[thick]	(0,2)--(1,2)--(1,3)--(0,3)--cycle;
						\draw[thick]	(2,2)--(3,2)--(3,3)--(2,3)--cycle;
						\draw[thick]	(1,0)--(2,0)--(2,1)--(1,1)--cycle;
						\draw[thick]	(0.5,2)--(1.33,1);
						\draw[thick]	(2.5,2)--(1.66,1);
						\draw (1.5,0.5) node {{$\alpha$}} ; 
						\draw (0.5,2.5) node {{$y$}} ; 
						\draw (2.5,2.5) node {{$y$}} ; 	
			\end{tikzpicture}}}
			\ + \ \\&
			\vcenter{\hbox{\begin{tikzpicture}[scale=0.4]
						\draw[thick]	(0,0)--(1,0)--(1,1)--(0,1)--cycle;
						\draw[thick]	(0,2)--(1,2)--(1,3)--(0,3)--cycle;
						\draw[thick]	(0,4)--(1,4)--(1,5)--(0,5)--cycle;
						\draw[thick]	(0.5,1)--(0.5,2);
						\draw[thick]	(0.5,3)--(0.5,4);	
						\draw (0.5,0.5) node {{$x$}} ; 
						\draw (0.5,2.5) node {{$\alpha$}} ; 
						\draw (0.5,4.5) node {{$y$}} ; 
			\end{tikzpicture}}}
			\ + \  
			\vcenter{\hbox{\begin{tikzpicture}[scale=0.4]
						\draw[thick]	(0,0)--(1,0)--(1,1)--(0,1)--cycle;
						\draw[thick]	(0,2)--(1,2)--(1,3)--(0,3)--cycle;
						\draw[thick]	(0,4)--(1,4)--(1,5)--(0,5)--cycle;
						\draw[thick]	(0.33,3)--(0.33,4);
						\draw[thick]	(0.66,1) to[out=60,in=270] (1.5,2.5) to[out=90,in=300] (0.66,4);	
						\draw (0.5,0.5) node {{$x$}} ; 
						\draw (0.5,2.5) node {{$\alpha$}} ; 
						\draw (0.5,4.5) node {{$y$}} ; 
			\end{tikzpicture}}}
			\ + \
			\vcenter{\hbox{\begin{tikzpicture}[scale=0.4]
						\draw[thick]	(0,0)--(1,0)--(1,1)--(0,1)--cycle;
						\draw[thick]	(0,2)--(1,2)--(1,3)--(0,3)--cycle;
						\draw[thick]	(0,4)--(1,4)--(1,5)--(0,5)--cycle;
						\draw[thick]	(0.33,1)--(0.33,2);
						\draw[thick]	(0.66,1) to[out=60,in=270] (1.5,2.5) to[out=90,in=300] (0.66,4);	
						\draw (0.5,0.5) node {{$x$}} ; 
						\draw (0.5,2.5) node {{$\alpha$}} ; 
						\draw (0.5,4.5) node {{$y$}} ; 
			\end{tikzpicture}}}
			\ + \  
			\vcenter{\hbox{\begin{tikzpicture}[scale=0.4]
						\draw[thick]	(0,0)--(1,0)--(1,1)--(0,1)--cycle;
						\draw[thick]	(0,2)--(1,2)--(1,3)--(0,3)--cycle;
						\draw[thick]	(0,4)--(1,4)--(1,5)--(0,5)--cycle;
						\draw[thick]	(0.33,1)--(0.33,2);
						\draw[thick]	(0.33,3)--(0.33,4);
						\draw[thick]	(0.66,1) to[out=60,in=270] (1.5,2.5) to[out=90,in=300] (0.66,4);	
						\draw (0.5,0.5) node {{$x$}} ; 
						\draw (0.5,2.5) node {{$\alpha$}} ; 
						\draw (0.5,4.5) node {{$y$}} ; 
			\end{tikzpicture}}}
			\ + \ 
			\scalebox{1.1}{$\frac{1}{2}$}\ 
			\vcenter{\hbox{\begin{tikzpicture}[scale=0.4]
						\draw[thick]	(0,0)--(1,0)--(1,1)--(0,1)--cycle;
						\draw[thick]	(2,0)--(3,0)--(3,1)--(2,1)--cycle;
						\draw[thick]	(1,2)--(2,2)--(2,3)--(1,3)--cycle;
						\draw[thick]	(1,4)--(2,4)--(2,5)--(1,5)--cycle;	
						\draw[thick]	(0.5,1)--(1.33,2);
						\draw[thick]	(2.5,1)--(1.66,2);
						\draw[thick]	(1.5,3)--(1.5,4);		
						\draw (1.5,2.5) node {{$\alpha$}} ; 
						\draw (0.5,0.5) node {{$x$}} ; 
						\draw (2.5,0.5) node {{$x$}} ; 	
						\draw (1.5,4.5) node {{$y$}} ; 	
			\end{tikzpicture}}}
			\ + \ 
			\vcenter{\hbox{\begin{tikzpicture}[scale=0.4]
						\draw[thick] (0.66,1)--(1.33,2);
						\draw[thick] (2.33,1)--(1.66,2);
						\draw[thick] (2.66,1) to (2.66,2.5) to[out=90,in=300]  (1.66,4);
						\draw[thick]	(0,0)--(1,0)--(1,1)--(0,1)--cycle;
						\draw[thick]	(2,0)--(3,0)--(3,1)--(2,1)--cycle;
						\draw[thick]	(1,2)--(2,2)--(2,3)--(1,3)--cycle;
						\draw[thick]	(1,4)--(2,4)--(2,5)--(1,5)--cycle;	
						\draw[thick]	(1.5,3)--(1.5,4);	
						\draw (1.5,4.5) node {{$y$}} ; 
						\draw (1.5,2.5) node {{$\alpha$}} ; 
						\draw (0.5,0.5) node {{$x$}} ; 
						\draw (2.5,0.5) node {{$x$}} ; 	
			\end{tikzpicture}}}
			\ + \ \cdots
		\end{align*}
		\caption{The first terms of the bowtie element 
			$(\1+x)\stackrel{\alpha}{\Join} (\1+y)$~.
		}
		\label{Fig:Pap}
	\end{figure*}
\end{definition}

\begin{theorem}[\cite{CamposVallette24}]
The deformation gauge group acts on Maurer--Cartan elements under the following formula 
\[\pap{(\1 +\lambda)}{\alpha}{(\1 +\lambda)^{-1}}-\left(\1+\lambda; \d\lambda \right)\cc (\1+\lambda)^{-1}~,\]
where the last term is given by the sum of the actions of 2-leveled directed simple graphs with one bottom vertex labelled by $\d \lambda$ and all the other bottom vertices labelled by $\lambda$.
\end{theorem}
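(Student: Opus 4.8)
The plan is to deduce the action formula from the universal gauge action by transporting it along the group isomorphism of \cref{thm:Liegraph}, using the differential extension of the first section as the organising device. To keep the two roles of the gauge parameter apart, write $\mu\coloneqq\ln(\1+\lambda)$, so that $\exp(\mu)=\1+\lambda$ under the graph exponential map and $\mu$ is the element of $\Gamma$ corresponding to the group-like element $\1+\lambda$ of $\G$. Recall that in any complete dg Lie algebra the gauge group acts by $\exp(\ad_\mu)(\alpha)+\tfrac{\id-\exp(\ad_\mu)}{\ad_\mu}(\d\mu)$. Embedding $\g$ into its differential extension $\g^+=A\oplus\k\delta$ collapses this to the single expression $\exp(\ad_\mu)(\delta+\alpha)-\delta$: since $\ad_\mu(\delta)=[\mu,\delta]=-\d\mu$, the $\delta$-component of $\exp(\ad_\mu)(\delta)$ reproduces $\tfrac{\id-\exp(\ad_\mu)}{\ad_\mu}(\d\mu)$, while the middle summand reproduces $\exp(\ad_\mu)(\alpha)$. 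Thus it suffices to evaluate $\exp(\ad_\mu)(\delta+\alpha)$ inside $\g^+$ and strip off the formal symbol $\delta$.

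The core step is to identify the adjoint action of $\exp(\mu)$ with the conjugation realised by the bowtie element, namely
\[\exp(\ad_\mu)(\beta)=\pap{(\1+\lambda)}{\beta}{(\1+\lambda)^{-1}}\qquad\text{for all }\beta\in A,\]
the middle slot being linear in $\beta$; this is the $\Liegra$-analogue of the associative identity $\exp(\ad_\mu)(\beta)=\exp(\mu)\star\beta\star\exp(-\mu)$. I would establish it by the two-step method already used for \cref{thm:Liegraph}. Both sides are universal expressions in $\lambda$ and $\beta$, so it is enough to check the identity in the free complete Lie-graph algebra on the relevant generators over $\QQ$; viewing that free object over $\RR$ and truncating modulo each layer $\F_k$ of the filtration produces a finite-dimensional real Lie algebra together with its simply connected Lie group, where the classical identity $\mathrm{Ad}_{\exp\mu}=\exp(\ad_\mu)$ holds. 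Matching the graph exponential with the Lie-theoretic exponential and the bowtie element with the genuine group conjugation $g\,\beta\,g^{-1}$ then yields the equality layer by layer, and passing to the limit over $k$ gives it in $\g^+$.

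Granting the conjugation identity, linearity of the bowtie in its middle slot splits the computation as
\[\exp(\ad_\mu)(\delta+\alpha)=\pap{(\1+\lambda)}{\alpha}{(\1+\lambda)^{-1}}+\pap{(\1+\lambda)}{\delta}{(\1+\lambda)^{-1}}.\]
The first summand is already the conjugation term of the claimed formula, so everything reduces to showing that the second summand, after removing the formal symbol $\delta$, equals $-(\1+\lambda;\d\lambda)\cc(\1+\lambda)^{-1}$.

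This last identification is the main obstacle, and it is exactly where the role of $\delta$ as the differential is used. In $\pap{(\1+\lambda)}{\delta}{(\1+\lambda)^{-1}}$ the symbol $\delta$ sits at the unique middle vertex, with the bottom vertices decorated by $\lambda$ and the top vertices by the expansion of $(\1+\lambda)^{-1}$. Using $[\delta,-]=\d$ and the fact that $\d$ is a derivation for every graph operation of $\g^+$, one resolves this vertex: the isolated term returns the subtracted symbol $\delta$, and in every other term the differential is forced, through the incident edges, onto the bottom group-like element. Because $\1+\lambda$ is group-like and $\d$ is a derivation, this telescopes, replacing the bottom $\1+\lambda$ by its derivative---the $2$-leveled configurations carrying a single $\d\lambda$ among the remaining $\lambda$'s, that is $(\1+\lambda;\d\lambda)$---while the top factor $(\1+\lambda)^{-1}$ is untouched and is recombined via the graph circle product $\cc$; the overall minus sign comes from $\ad_\mu(\delta)=-\d\mu$. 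Making this propagation precise---tracking how a single $\delta$ moves through arbitrary $3$-leveled graphs and matching the result to $(\1+\lambda;\d\lambda)\cc(\1+\lambda)^{-1}$---is the one genuinely new calculation; as above, the safest way to control it is to carry it out universally over $\QQ$ and then specialise, reducing it to the statement that the classical differential-extension correction $\tfrac{\id-\exp(\ad_\mu)}{\ad_\mu}(\d\mu)$ matches $-(\1+\lambda;\d\lambda)\cc(\1+\lambda)^{-1}$ under the graph exponential map.
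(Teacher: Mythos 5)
Your overall strategy---transporting the universal gauge action through the isomorphism of \cref{thm:Liegraph}, working universally in the free complete Lie-graph algebra over $\QQ$, truncating to finite-dimensional real Lie algebras, and organising the computation through a differential extension---is in the same spirit as the paper's argument, which indeed invokes ``classical Lie theoretical methods'' and a differential extension. But there is a genuine gap at the central device: you take the differential extension to be the \emph{one-dimensional} extension $\g^+ = A \oplus \k\delta$ with $[\delta,x]=\d x$, and the paper explicitly warns that this is insufficient here: the differential extension of a dg Lie-graph algebra ``is more subtle than in the Lie case where one-dimensional extensions were enough.'' The reason is structural. Declaring $[\delta,x]=\d x$ only prescribes the skew-symmetrization of the binary operation $\star$ on inputs involving $\delta$; it does not determine the value of a general graph operation $\gra(\ldots,\delta,\ldots)$ in which $\delta$ sits at a vertex with several incident edges. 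Since $\Liegra$ is not generated by its binary operation (the paper stresses that iterating $\star$ fails to recover all operations, and that $\Liegra$ is not even finitely generated), there is no way to propagate your prescription to the operations you actually need. In particular, the expression $\pap{(\1+\lambda)}{\delta}{(\1+\lambda)^{-1}}$, on which your whole reduction rests, involves $3$-leveled graphs whose middle vertex $\delta$ has arbitrary valence, and these evaluations are simply undefined in $A\oplus\k\delta$. One must instead construct a larger differential extension carrying a genuine Lie-graph structure, which is precisely the technical content of \cite{CamposVallette24}.

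As a consequence, your final step---the ``telescoping'' identification of $\pap{(\1+\lambda)}{\delta}{(\1+\lambda)^{-1}}-\delta$ with $-\left(\1+\lambda;\d\lambda\right)\cc(\1+\lambda)^{-1}$---cannot even be set up in your framework, and your proposal ultimately restates this identity as a claim to be checked (``reducing it to the statement that the classical correction matches \ldots'') rather than proving it. Note that this correction term is exactly the genuinely new content of the theorem: the paper's sketch of proof of \cref{thm:preLie} points out that all other ingredients were already available in the pre-Lie case, and defers precisely this last term to the present, more general statement. So the one part you leave as ``the one genuinely new calculation'' is the part the theorem exists to supply, and the tool you propose to carry it out with is the one the paper says does not work.
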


\begin{proof}[Sketch of proof]
The detailed proof can be found in \cite[Section~2.3]{CamposVallette24}. It relies on classical  Lie theoretical methods again and the introduction of the notion of the differential extension of a dg Lie-graph algebras, which is more subtle than in the Lie case where one-dimensional extensions were enough. 
\end{proof}

The inverse of formal group-like element is explicitly  given by 
\[(\1+\lambda)^{-1}=\1+\displaystyle \sum_{\gra \in \dcGra_{\Sy}} \frac{(-1)^{|\gra|}}{|\mathrm{Aut}(\gra)|} \, \gra(\lambda)~. \]

For Lie-graph algebras whose operations vanish for graphs different from rooted trees, one recovers all the formulas given previously for pre-Lie algebras.

\begin{application}
The Maurer--Cartan elements of the complete dg Lie-admissible algebra 
$$  \left(
\Hom_{\Sy} \left({\overline{\P}}^{\ac}, \End_V\right),  \d, \star, {1}
\right)$$
associated to a Koszul properad $\P$ and a chain complex $V$ correspond to \emph{homotopy $\P$-gebra} structures, i.e. 
$\P_\infty=\Omega \P^{\ac}$-gebra structures, on $V$. 
The operad of ``all the natural operations'' acting on this deformation complex of morphisms of properads is the operad $\Liegra$ introduced above. 
Like in the operadic case, the deformation gauge group is the group of \emph{$\infty$-isotopies} between homotopy $\P$-gebras, see \cite[Section~1]{CV25II}. 
This includes the case of pre-Calabi--Yau algebras and homotopy versions of (involutive) Lie bialgebras, Frobenius bialgebras, Airy structures and double Poisson bialgebras, for instance. 

\medskip 

The  Lie-graph integration-deformation context plays a key role in the definition of the \emph{Kaledin--Emprin classes} which faithfully detect the formality of algebraic structures encoded by groupoid colored properads \cite{Emprin24}. 
Considering the exponential side of the theory, namely the the group of $\infty$-isotopies, allows C. Emprin 
to successfully address the intricate formality property  in prime characteristic. 
\end{application}


\section{The higher cases: $\mathrm{L}_\infty$-algebras and absolute curved $\mathrm{EL}_\infty$-algebras}

So far we have been considering only the Maurer--Cartan elements (degree $-1$) and the gauges (degree $0$). One may wonder which role could play the higher degree elements? Degree $1$ elements can actually be interpreted as comparisons between gauges, degree $2$ elements as comparisons between these comparisons, etc. Considering all these higher gauges will pave the way to the effective integration theory for the homotopy meaningful notions associated to dg Lie algebras: $\Li$-algebra in characteristic $0$ \cite{Robert-NicoudVallette20} and absolute curved $\mathrm{EL}_\infty$-algebra in prime characteristic \cite{Roca23}. 

\subsection{$\mathrm{L}_\infty$-algebras} 

\begin{definition}[Shifted $\Li$-algebras]
A \emph{shifted $\Li$-algebra} $\g=\left(A, \d, \{\ell_m\}_{m \geqslant 2}\right)$ is a chain complex equipped with  symmetric operations $\ell_n \colon A^{\odot m}\coloneq \left(A^{\otimes m}\right)_{\Sy_n} \to A$ of degree $-1$ satisfying 
	\[
	\partial\left(\ell_m\right)+\sum_{\substack{p+q=m\\ 2\leqslant p,q \leqslant m}}
	\sum_{\sigma\in \mathrm{Sh}_{p,q}^{-1}}
	(\ell_{p+1}\circ_{1} \ell_q)^{\sigma}=0\ ,
	\]
	for any $m\geqslant2$~, where $ \mathrm{Sh}_{p,q}^{-1}$ denotes the set of the inverses of $(p,q)$-shuffles and where $\partial\left(\ell_m\right)\coloneq \d \circ \ell_m +\ell_m\circ  \d_{A^{\odot m}}$~.  
\end{definition}

A dg Lie algebra structure on a chain complex $(A,\d)$ is a shifted $\Li$-algebra structure on the suspension $sA$ such that the higher bracket $\ell_m$ vanish for $m\geqslant 3$. 
We denote by $\sLi$ the dg operad encoding shifted $\Li$-algebras. 
A \emph{complete $s\Li$-algebra} is a shifted $\Li$-algebra in the symmetric monoidal category of complete chain complexes.
	We denote by $\sLialg$ the associated category made up of 
	strict morphisms, i.e. single maps preserving the structural operations. 
The notion of an $\Li$-algebra contains more elaborate signs. 
Contrary to one might think at first sight, the theory of (shifted) $\Li$-algebras is \emph{simpler} than that of Lie algebras. For instance, the free (shifted)  $\Li$-algebra $\sLi(V)$ on a chain complex $V$ is easy to make explicit, on the opposite to the free Lie algebra: it is simply given by rooted trees with leaves labelled by elements of $V$. 
\[
		\begin{tikzpicture}
		\def\scale{0.6};
		\pgfmathsetmacro{\diagcm}{sqrt(2)};
		
		\def\xangle{35};
		\pgfmathsetmacro{\xcm}{1/sin(\xangle)};
		
		\coordinate (r) at (0,0);
		\coordinate (v11) at ($(r) + (0,\scale*1)$);
		\coordinate (v21) at ($(v11) + (180-\xangle:\scale*\xcm)$);
		\coordinate (v22) at ($(v11) + (\xangle:\scale*\xcm)$);
		\coordinate (v31) at ($(v22) + (45:\scale*\diagcm)$);
		\coordinate (l1) at ($(v21) + (135:\scale*\diagcm)$);
		\coordinate (l2) at ($(v21) + (0,\scale*1)$);
		\coordinate (l3) at ($(v21) + (45:\scale*\diagcm)$);
		\coordinate (l4) at ($(v22) + (135:\scale*\diagcm)$);
		\coordinate (l5) at ($(v31) + (135:\scale*\diagcm)$);
		\coordinate (l6) at ($(v31) + (45:\scale*\diagcm)$);
		
		\draw[thick] (r) to (v11);
		\draw[thick] (v11) to (v21);
		\draw[thick] (v11) to (v22);
		\draw[thick] (v21) to (l1);
		\draw[thick] (v21) to (l2);
		\draw[thick] (v21) to (l3);
		\draw[thick] (v22) to (l4);
		\draw[thick] (v22) to (v31);
		\draw[thick] (v31) to (l5);
		\draw[thick] (v31) to (l6);
		
		\node[above] at (l1) {$v_{i_1}$};
		\node[above] at (l2) {$v_{i_2}$};
		\node[above] at (l3) {$v_{i_3}$};
		\node[above] at (l4) {$v_{i_4}$};
		\node[above] at (l5) {$v_{i_5}$};
		\node[above] at (l6) {$v_{i_6}$};
		\end{tikzpicture}
\]

\begin{definition}[Maurer--Cartan element]
A \emph{Maurer--Cartan element} of a complete $s\Li$-algebra $\g$ is a degree $0$ element $\alpha\in \F_1 A_0$ satisfying the \emph{Maurer--Cartan equation}: 
\[
\d \alpha+\sum_{m\geqslant 2} {\textstyle \frac{1}{m!}}\ell_m(\alpha, \ldots, \alpha)=0 \ .
\]
\end{definition}

We denote by $\MC(\g)$ the set of Maurer-Cartan elements. In order to get cleaner formulas, one often introduced the convention $\ell_1\coloneq \d$~. 
The degree $1$ elements $\lambda \in \F_1A_1$ are called \emph{the gauges}; they canonically give rise to a vector fields 
\[\alpha \mapsto \sum_{m\geqslant 1} {\textstyle \frac{1}{(m-1)!}}\ell_m(\alpha, \cdots, \alpha, \lambda) \in \mathrm{T}_\alpha  \mathrm{MC}(\g)\ .\]

\begin{definition}[Gauge equivalence]
Two Maurer--Cartan elements are \emph{gauge equivalent} if there exists a gauge for which the flow of the associated vector field relates them in finite time.
\end{definition}

This gauge equivalence relation admits the following explicit formula.

\begin{proposition}[{\cite[Proposition~5.9]{Getzler09} and \cite[Proposition~1.10]{Robert-NicoudVallette20}}]
Using the gauge $\lambda \in \F_1A_1$, any Maurer--Cartan element $\alpha$ is gauge equivalent (at time $1$) to 
\[\sum_{\tau\in \mathsf{PRT}}{\textstyle \frac{1}{C(\tau)}}\tau^\lambda(\alpha)~,\]
where the sum runs over planar rooted trees, 
where the decomposition of a planar rooted tree $\tau = \mathrm{c}_m\circ(\tau_1,\ldots,\tau_m)$ as the grafting of $m$ planar rooted trees $\tau_1,\ldots,\tau_m$ along the leaves of 
the root corolla $\mathrm{c}_m$ defines recursively the coefficients $C(\tau)\in \mathbb{N}$ by
\[
C(\, |\, ) \coloneq 1\ ,\qquad C\left(\mathrm{c}_m\right) \coloneq m!\ ,\qquad C(\tau) \coloneq m! |\tau|\prod_{i=1}^m C(\tau_i)\ ,
\]
and the elements $\tau^\lambda(\alpha)\in A_0$ by
\[
|^\lambda(\alpha)\coloneq \alpha\ ,\qquad \mathrm{c}_m^\lambda(\alpha)\coloneq\ell_{m+1}(\alpha, \ldots, \alpha, \lambda)\ ,\qquad \tau^\lambda(\alpha) \coloneq \ell_{m+1}\left(\tau_1^\lambda(\alpha),\ldots, \tau_m^\lambda(\alpha), \lambda \right).
\]
\end{proposition}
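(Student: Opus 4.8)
The plan is to recognise the asserted element as the value at time $1$ of the flow defining the gauge equivalence, and to solve the corresponding ordinary differential equation by a planar-tree expansion. Write the gauge vector field as $V(\beta)\coloneq\sum_{k\geqslant 0}\tfrac{1}{k!}\,\ell_{k+1}(\beta,\ldots,\beta,\lambda)$, where $\beta$ fills $k$ of the symmetric inputs and $\lambda$ the last one; with the convention $\ell_1=\d$ this is precisely the vector field introduced above. The gauge equivalence relates $\alpha$ to $\beta(1)$, where $\beta(t)$ is the unique solution of $\dot\beta(t)=V(\beta(t))$ with $\beta(0)=\alpha$. Existence and uniqueness hold because, modulo each step $\F_k A_0$ of the complete filtration, $V$ is a polynomial vector field on a finite stage and the filtration is complete; so it suffices to exhibit one solution whose value at $t=1$ is the claimed tree sum.

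Next I would introduce the time-dependent candidate
\[\beta(t)\coloneq \sum_{\tau\in\mathsf{PRT}}\frac{t^{|\tau|}}{C(\tau)}\,\tau^\lambda(\alpha)~,\]
where $|\tau|$ is the number of internal vertices of $\tau$. Since $\alpha,\lambda\in\F_1$ and each internal vertex of $\tau$ carries one copy of $\lambda\in\F_1 A_1$, one has $\tau^\lambda(\alpha)\in\F_{|\tau|}A_0$, so only finitely many terms survive modulo each $\F_k A_0$ and the series converges in the complete topology. At $t=0$ only the one-leaf tree contributes, giving $\beta(0)=|^\lambda(\alpha)=\alpha$, and at $t=1$ the series is exactly the asserted formula; everything therefore reduces to checking that $\beta(t)$ solves the flow equation.

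The heart of the argument is this verification, which is a purely combinatorial matching of coefficients. Expanding $V(\beta(t))$ by multilinearity of $\ell_{k+1}$ turns $\ell_{k+1}\bigl(\tau_1^\lambda(\alpha),\ldots,\tau_k^\lambda(\alpha),\lambda\bigr)$ into $\sigma^\lambda(\alpha)$ for $\sigma=\mathrm{c}_k\circ(\tau_1,\ldots,\tau_k)$; since ordered $k$-tuples of planar rooted trees are in bijection with planar rooted trees whose root corolla has arity $k$, the sum reorganises as a sum over all $\sigma\neq\,|\,$. The rule $C(\sigma)=k!\,|\sigma|\prod_i C(\tau_i)$ collapses the factor $\tfrac{1}{k!}\prod_i C(\tau_i)^{-1}$ into $|\sigma|/C(\sigma)$, while $\prod_i t^{|\tau_i|}=t^{|\sigma|-1}$ because $|\sigma|=1+\sum_i|\tau_i|$. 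Hence
\[V(\beta(t))=\sum_{\sigma\neq\,|}\frac{|\sigma|\,t^{|\sigma|-1}}{C(\sigma)}\,\sigma^\lambda(\alpha)=\dot\beta(t)~,\]
which is the termwise derivative of the candidate. By uniqueness of the solution, $\beta$ is the gauge flow, and evaluating at $t=1$ proves the claim.

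I expect the only genuinely delicate points to be bookkeeping rather than conceptual. The subtle step is the bijection between ordered tuples of subtrees and planar rooted trees, together with the precise interplay of the normalisation $C(\tau)$ with the time-derivative: it is exactly the factor $|\sigma|$ in the recursion for $C$ that forces the exponent $t^{|\tau|}$ and makes $\dot\beta$ match $V(\beta)$. Note that no linear independence of the elements $\tau^\lambda(\alpha)$ is required, since $\dot\beta(t)=V(\beta(t))$ is an equality in $A$ obtained by reindexing convergent series; one may nonetheless justify the term-by-term manipulations once and for all by first establishing the identity of coefficients in the free complete shifted $\Li$-algebra on the two generators $\alpha$ and $\lambda$, and then transporting it along the unique strict morphism sending the generators to the given $\alpha$ and $\lambda$.
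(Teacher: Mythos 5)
Your proof is correct and follows essentially the same route as the cited sources (the survey itself gives no proof of this proposition, only the references to Getzler and Robert-Nicoud--Vallette): write the tree series with weights $t^{|\tau|}/C(\tau)$, check it solves the gauge-flow ODE by the reindexing of ordered tuples of planar trees as planar trees, and observe that the recursion defining $C(\tau)$ is rigged precisely so that the factor $|\sigma|$ produces the time derivative. One small imprecision in your convergence argument: the bound $\tau^\lambda(\alpha)\in\F_{|\tau|}A_0$ alone does not force finiteness modulo $\F_k A_0$, since there are infinitely many planar rooted trees with a fixed number of vertices (corollas of arbitrary arity); you need the sharper bound $\tau^\lambda(\alpha)\in\F_{|\tau|+L(\tau)}A_0$, where $L(\tau)$ denotes the number of leaves, which follows from $\alpha\in\F_1 A_0$ exactly as you note, and for which the set of trees with $|\tau|+L(\tau)<k$ is indeed finite.
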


\begin{example}
For the planar rooted tree 
\[
\tau\coloneq \vcenter{\hbox{
	\begin{tikzpicture}
		\def\scale{0.75}
		\pgfmathsetmacro{\diagcm}{sqrt(2)};
		
		\coordinate (r) at (0,0);
		
		\coordinate (v11) at ($(r) + (0,\scale*0.6)$);
		
		\coordinate (v21) at ($(v11) + (135:\scale*\diagcm)$);
		\coordinate (v22) at ($(v11) + (45:\scale*\diagcm)$);
		
		\coordinate (v31) at ($(v21) + (0,\scale*1.75)$);
		
		\coordinate (l1) at ($(v21) + (135:\scale*\diagcm)$);
		\coordinate (l2) at ($(v21) + (45:\scale*\diagcm)$);
		\coordinate (l3) at ($(v22) + (0,\scale*1)$);
		
		\node at (v11) {$\bullet$};
		\node at (v21) {$\bullet$};
		\node at (v22) {$\bullet$};
		\node at (v31) {$\bullet$};
		
		\draw[very thick] (r) to (v11);
		\draw[very thick] (v11) to (v21);
		\draw[very thick] (v11) to (v22);
		\draw[very thick] (v21) to (v31);
		\draw[very thick] (v21) to (l1);
		\draw[very thick] (v21) to (l2);
		\draw[very thick] (v22) to (l3);
	\end{tikzpicture}}}
\]
the coefficient $C(\tau)$ is equal to $96$ and the element $\tau^\lambda(\alpha)$ is equal to 
\[
\tau^\lambda(\alpha) = \ell_3(\ell_4(\alpha, \d \lambda, \alpha, \lambda), \ell_2(\alpha, \lambda), \lambda)\ ,
\] 
which corresponds graphically to 
\[\tau^\lambda(\alpha)= \vcenter{\hbox{
\begin{tikzpicture}
	\def\scale{1}
	\pgfmathsetmacro{\diagcm}{sqrt(2)};
	\pgfmathsetmacro{\xcm}{1/sin(60)};
	
	\def\angle{40}
	\pgfmathsetmacro{\ycm}{1/sin(\angle)};
	
	\def\anglebis{25}
	\pgfmathsetmacro{\zcm}{1/sin(\anglebis)};
	
	\coordinate (r) at (0,0);
	
	\coordinate (v11) at ($(r) + (0,\scale*0.6)$);
	
	\coordinate (v21) at ($(v11) + (135:\scale*\diagcm)$);
	\coordinate (v22) at ($(v11) + (45:\scale*\diagcm)$);
	
	\coordinate (v31) at ($(v21) + (0,\scale*1.75)$);
	
	\coordinate (l1) at ($(v21) + (120:\scale*\xcm)$);
	\coordinate (l2) at ($(v21) + (60:\scale*\xcm)$);
	\coordinate (l3) at ($(v22) + (0,\scale*1)$);
	
	\coordinate (lam1) at ($(v31) + (0,\scale*1)$);
	\coordinate (lam2) at ($(v21) + (\angle:\scale*\ycm)$);
	\coordinate (lam3) at ($(v22) + (\angle:\scale*\ycm)$);
	\coordinate (lam4) at ($(v11) + (\anglebis:\scale*\zcm)$);
	
	\node at (v11) {$\bullet$};
	\node at (v21) {$\bullet$};
	\node at (v22) {$\bullet$};
	\node at (v31) {$\bullet$};
	
	\draw[very thick] (r) to (v11);
	\draw[very thick] (v11) to (v21);
	\draw[very thick] (v11) to (v22);
	\draw[very thick] (v21) to (v31);
	\draw[very thick] (v21) to (l1);
	\draw[very thick] (v21) to (l2);
	\draw[very thick] (v22) to (l3);
	
	\draw[thin] (v31) to (lam1);
	\draw[thin] (v21) to (lam2);
	\draw[thin] (v22) to (lam3);
	\draw[thin] (v11) to (lam4);
	
	\node[above] at (l1) {$\scriptstyle{\alpha}$};
	\node[above] at (l2) {$\scriptstyle{\alpha}$};
	\node[above] at (l3) {$\scriptstyle{\alpha}$};
	
	\node[above] at (lam1) {$\scriptstyle{\lambda}$};
	\node[above] at (lam2) {$\scriptstyle{\lambda}$};
	\node[above] at (lam3) {$\scriptstyle{\lambda}$};
	\node[above] at (lam4) {$\scriptstyle{\lambda}$};
	
	\node[left] at (v11) {$\scriptstyle{\ell_3}$};
	\node[left] at (v21) {$\scriptstyle{\ell_4}$};
	\node[left] at (v22) {$\scriptstyle{\ell_2}$};
	\node[left] at (v31) {$\scriptstyle{\ell_1 = \d}$};
\end{tikzpicture}}}
\]
\end{example}

There is no hope to write this equivalence relation as a gauge group action: since the Jacobi relation does not hold any more, the BCH formula fails to be associative. One solution is to look for a much wider context, i.e. include the higher gauges into the picture. In the Lie case, the data of the Maurer--Cartan elements and the action of the gauges form the Deligne groupoid. Heuristically speaking, it is made up of points, some of which related by paths. Higher up, one expects to have homotopies between paths, and homotopies between homotopies, etc. In the end, this should form some kind of topological space, a combinatorial model of which is provided by the notion of a \emph{Kan complex}, also known as an \emph{$\infty$-groupoid}, that is a simplicial set such that every horn can be filled. 

\medskip

Recall that the data of a pair of adjoint functors 
\[
\hbox{
	\begin{tikzpicture}
	\def\upshift{0.185}
	\def\downshift{0.15}
	\pgfmathsetmacro{\midshift}{0.005}
	
	\node[left] (x) at (0, 0) {$\mathrm{L}\ \ :\ \ \sSe$};
	\node[right] (y) at (2, 0) {$\sLialg\ \ :\ \ \mathrm{R}$};
	
	\draw[-{To[left]}] ($(x.east) + (0.1, \upshift)$) -- ($(y.west) + (-0.1, \upshift)$);
	\draw[-{To[left]}] ($(y.west) + (-0.1, -\downshift)$) -- ($(x.east) + (0.1, -\downshift)$);
	
	\node at ($(x.east)!0.5!(y.west) + (0, \midshift)$) {\scalebox{0.8}{$\perp$}};
	\end{tikzpicture}}
\]
is equivalent to the data of a cosimplicial object in the right-hand side category \cite{Kan58bis}, 
corresponding to the restriction of the functor $\mathrm{L}$ 
to the image of the Yoneda embedding. 
This provides us with an efficient way to construct a functor from complete  $s\Li$-algebras to simplicial sets. 
To this extend, one can consider Dupont's simplicial contraction \cite{Dupont76}:
\[
\hbox{
	\begin{tikzpicture}
	\def\upshift{0.075}
	\def\downshift{0.075}
	\pgfmathsetmacro{\midshift}{0.005}
	
	\node[left] (x) at (0, 0) {$\Omega_\bullet$};
	\node[right=1.5 cm of x] (y) {$\mathrm{C}_\bullet$};
	
	\draw[-{To}] ($(x.east) + (0.1, \upshift)$) -- node[above]{\mbox{\tiny{$p_\bullet$}}} ($(y.west) + (-0.1, \upshift)$);
	\draw[-{To}] ($(y.west) + (-0.1, -\downshift)$) -- node[below]{\mbox{\tiny{$i_\bullet$}}} ($(x.east) + (0.1, -\downshift)$);
	
	\draw[->] ($(x.south west) + (0, 0.1)$) to [out=-160,in=160,looseness=5] node[left]{\mbox{\tiny{$h_\bullet$}}} ($(x.north west) - (0, 0.1)$);
	
	\end{tikzpicture}}
\]
between the piece-wise polynomial differential forms 
and the cellular cochains 
of the standard geometric simplices. 
The former being a simplicial commutative algebras, one can apply the homotopy transfer theorem to endow the latter one with a kind of simplicial homotopy commutative algebra structure. Since it is degree-wise finite dimensional, one can consider its linear dual: the cellular chain complexes of the standard geometric simplices $\mathrm{C}^\bullet$ carry a canonical cosimplicial homotopy commutative coalgebra structure. By Koszul duality, its image under a suitable cobar functor $\hatCobar_\pi$ produces the required cosimplicial complete  $s\Li$-algebra. 

\begin{definition}[Universal Maurer--Cartan algebra]
	The cosimplicial complete  $\sLi$-algebra
	\[
	\mc^\bullet\coloneq \hatCobar_\pi \mathrm{C}^\bullet\cong \left(\widehat{\sLi}\big(C(\De{\bullet})\big), \d\right)\ .
	\]
	is called the \emph{universal Maurer--Cartan algebra}\index{universal Maurer--Cartan algebra}.
\end{definition}

We denote by $a_I$, for $\emptyset\neq I\subseteq [n]$, the cells of the geometric $n$-simplex. For $n=0$, one gets the quasi-free complete $\sLi$-algebra on one Maurer--Cartan element:
\[\mc^0\cong \left(\widehat{\sLi}(a_0), \d\right)\,, \quad \text{with} \quad  \d a_0 =-\sum_{m\geqslant 2} {\textstyle \frac{1}{m!}}\ell_m(a_0, \ldots, a_0)\ .
\]
For $n=1$, one gets the quasi-free complete $\sLi$-algebra on two Maurer--Cartan elements and a gauge relating them:
\[\mc^1\cong \left(\widehat{\sLi}(a_0, a_1, a_{01}), \d\right)~.\]

\begin{definition}[Integration functor]\label{def:SimpRep}
The  \emph{integration functor} of complete $\sLi$-algebras is defined by 
\begin{align*}
\mathrm{R}\ :\ \sLialg{}&\ \longrightarrow\ \sSe\\
\g{}&\ \longmapsto\ \Hom_{\,\sLialg}\left(\mc^\bullet, \g\right).
\end{align*}
\end{definition}

A direct corollary of the explicit form for $\mc^0$ and $\mc^1$ given above is 
\[
\mathrm{R}(\g)_0\cong \MC(\g)\quad \text{and} \quad \pi_0(\mathrm{R}(\g))\cong \mathcal{MC}(\g)~,
\]
where the latter stands for the moduli space of Maurer--Cartan elements up to gauge equivalences. 
By definition, the integration functor $\R$ comes equipped with the following left adjoint functor 
\[\mathrm{L}\coloneq \mathrm{Lan}_\mathrm{Y} \mc^\bullet\ : \ \sSe \to \sLialg~,
\] 
where $\mathrm{Lan}_\mathrm{Y}$ stands of the left Kan extension along the Yoneda embedding. 

\begin{Historical}
The definition of a first integration functor can be traced through D. Sullivan's foundational article on rational homotopy theory \cite{Sullivan77};  it was extensively studied by V. Hinich in \cite{Hinich97bis}. It is explicitly given by 
\[
\MC_\bullet(\g)\coloneq\MC\left(\g \, \widehat{\otimes}\, \Omega_\bullet\right),
\]
where the tensor product of a complete $\sLi$-algebra with a simplicial commutative algebra carries a canonical simplicial complete $\sLi$-algebra structure. The $0$-simplicies coincide with the Maurer--Cartan elements $\MC_0(\g)\cong\MC(\g)$ but the set of $1$-simplicies $\MC_1(\g)$ is way too big to contain only the gauges (but it has the same homotopy type). E. Getzler introduced in \cite{Getzler09} the following gauge condition 
\[ \MC_\bullet(\g)\cap \ker h_\bullet\cong \R(\g)~, \]
which fixes this issue. Bypassing this intrinsic definition, Getzler's functor was proved to be 
represented by the aforementioned universal Maurer--Cartan algebra in \cite{Robert-NicoudVallette20}.
\end{Historical}

One can also endow $\g \, \widehat{\otimes}\, \mathrm{C}_\bullet$ with a complete $\sLi$-algebra structure such that 
\[\R(\g)_n \cong \MC\left(\g \, \widehat{\otimes}\, C\big(\De{n}\big)^\vee\right) \ni x=\sum_{\emptyset\neq I\subseteq [n]} x_I \otimes a_I^\vee~, \]
with the degree equal to $|x_I|=|I|-1$~. So its elements can be represented on the geometric $n$-simplex by 
	\[
	\begin{tikzpicture}
	
		\coordinate (v0) at (210:1.5);
		\coordinate (v1) at (90:1.5);
		\coordinate (v2) at (-30:1.5);
		
		\fill[black, opacity=0.1] (v0)--(v1)--(v2)--cycle;
		\draw[line width=1] (v0)--(v1)--(v2)--cycle;
		
		\begin{scope}[decoration={
			markings,
			mark=at position 0.55 with {\arrow{>}}},
			line width=1
		]
			
			\path[postaction={decorate}] (v0) -- (v1);
			\path[postaction={decorate}] (v1) -- (v2);
			\path[postaction={decorate}] (v0) -- (v2);
		
		\end{scope}
		
		
		\node at ($(v0) + (30:-0.3)$) {$x_0$};
		\node at ($(v1) + (0,0.3)$) {$x_1$};
		\node at ($(v2) + (-30:0.3)$) {$x_2$};
		
		\node at ($(v0)!0.5!(v1) + (-30:-0.4)$) {$x_{01}$};
		\node at ($(v1)!0.5!(v2) + (30:0.4)$) {$x_{12}$};
		\node at ($(v0)!0.5!(v2) + (0,-0.4)$) {$x_{02}$};
		
		\node at (0,0) {$x_{012}$};
		
	\end{tikzpicture}
	\]
which provides us with an  efficiently way to work out the properties of the representation functor.

\medskip

It remains to prove that the integration functor produces an $\infty$-groupoid. In the present case, there exists a neat description of the set of horn fillers. 

\begin{theorem}[\cite{Robert-NicoudVallette20}]\label{thm:hornfillers}
There is a canonical natural bijection 
\begin{equation*}
\left\{\vcenter{\hbox{
	\begin{tikzpicture}
		\node (a) at (0, 0) {$\Ho{k}{n}$};
		\node (b) at (2, 0) {$\R(\g)$};
		\node (c) at (0, -1.5) {$\De{n}$};
		
		\draw[->] (a) to (b);
		\draw[right hook->] (a) to node[below=-0.05cm, sloped]{$\sim$} (c);
		\draw[->, dashed] (c) to (b);
	\end{tikzpicture}
}}\right\} 
\cong \g_n\ .
\end{equation*}
\end{theorem}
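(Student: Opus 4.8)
The plan is to convert the geometric horn-filling problem into an algebraic relative lifting problem in $\sLialg$ and then to solve it by a completeness argument. First I would exploit the adjunction $\mathrm{L} \dashv \R$ together with the fact that $\mathrm{L}(\De{n}) \cong \mc^n$ (as $\mathrm{L} = \mathrm{Lan}_{\mathrm{Y}} \mc^\bullet$ extends $\mc^\bullet$ along the Yoneda embedding). Applying $\mathrm{L}$ to the horn inclusion $\Ho{k}{n} \hookrightarrow \De{n}$ yields a morphism $\iota \colon \mathrm{L}(\Ho{k}{n}) \to \mc^n$, and by adjunction the displayed set of fillers is in bijection with the set of strict morphisms $\phi \colon \mc^n \to \g$ in $\sLialg$ whose restriction $\phi \circ \iota$ equals the prescribed horn. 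Everything then hinges on the relative structure of $\iota$.

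The key structural input is that $\mc^n \cong (\widehat{\sLi}(C(\De{n})), \d)$ is quasi-free on the cells $a_I$, $\emptyset \neq I \subseteq [n]$, with $|a_I| = |I| - 1$. I would show that $\mathrm{L}$ sends the horn inclusion to the inclusion of the quasi-free sub-$\sLi$-algebra generated by the cells indexed by the faces of $\Ho{k}{n}$, that is by all $a_I$ with $I \subseteq [n]$ nonempty, $I \neq [n]$ and $I \neq [n]\setminus\{k\}$. Consequently $\iota$ freely adjoins exactly two generators, namely $a_{[n]\setminus\{k\}}$ in degree $n-1$ and the top cell $a_{[n]}$ in degree $n$. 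Inspecting the cobar differential $\hatCobar_\pi$ coming from the Dupont/homotopy-transfer coalgebra structure on $C(\De{\bullet})$, the simplicial boundary contributes to $\d a_{[n]}$ the triangular term $\pm a_{[n]\setminus\{k\}}$ with invertible coefficient, while every other summand either lies in $\mathrm{L}(\Ho{k}{n})$ or involves $a_{[n]\setminus\{k\}}$ inside a bracket $\ell_m$ together with a factor in $\F_1$, hence strictly raises the filtration weight. Likewise $\d a_{[n]\setminus\{k\}}$ lies entirely in $\mathrm{L}(\Ho{k}{n})$.

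With this normal form I would define the candidate bijection by $\phi \mapsto \phi(a_{[n]}) \in \g_n$. Given any $g \in \g_n$, compatibility of $\phi$ with the differential on the generator $a_{[n]}$ reads $\d g = \phi(\d a_{[n]}) = \pm \phi(a_{[n]\setminus\{k\}}) + (\text{terms fixed by the horn, plus } a_{[n]\setminus\{k\}}\text{-contributions of higher filtration})$. Since the horn already determines $\phi$ on $\mathrm{L}(\Ho{k}{n})$, the assignment $\phi(a_{[n]\setminus\{k\}}) \mapsto \text{right-hand side}$ is an invertible scalar times $\id$ plus a filtration-raising operator; completeness of $\g$ then yields a unique solution for $\phi(a_{[n]\setminus\{k\}})$. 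This determines $\phi$ uniquely from $g$, giving injectivity and producing the only possible candidate for each $g$.

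The remaining and, I expect, most delicate point is to check that this candidate is genuinely a morphism of $\sLi$-algebras, i.e. that it also commutes with $\d$ on the generator $a_{[n]\setminus\{k\}}$. Here I would differentiate the defining relation for $\phi(a_{[n]\setminus\{k\}})$ and use $\d^2 a_{[n]} = 0$ in $\mc^n$ together with the facts that $\phi$ already commutes with $\d$ on $\mathrm{L}(\Ho{k}{n})$ and that $\d a_{[n]\setminus\{k\}} \in \mathrm{L}(\Ho{k}{n})$; after inverting another filtration-unipotent operator this forces the missing compatibility automatically. Naturality in $\g$ is then immediate, as the whole construction is expressed purely through $\Hom_{\sLialg}(\mc^n, -)$ and the fixed morphism $\iota$. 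The genuine obstacle is the structural claim that $\mathrm{L}$ carries the horn inclusion to the quasi-free subalgebra on the horn cells with the stated triangular differential: establishing it relies on the Reedy cofibrancy of $\mc^\bullet$, so that this colimit of quasi-free $\sLi$-algebras is computed cell-by-cell, and on the explicit coalgebra structure on $C(\De{\bullet})$ pinning down the leading term of $\d a_{[n]}$.
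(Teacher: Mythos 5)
Your overall strategy is the paper's: reduce via the adjunction $\mathrm{L}\dashv\R$ to the structure of the morphism $\mathrm{L}(\Ho{k}{n})\to\mc^n$, observe that it freely adjoins the two cells $a_{[n]\setminus\{k\}}$ and $a_{[n]}$ with a triangular differential, and conclude by completeness. The paper packages this universally, as a change of generators giving $\mc^n\cong \mathrm{L}(\Ho{k}{n})\,\widehat{\sqcup}\,\widehat{\sLi}(u,du)$, from which the bijection $\Hom_\sSe(\De{n},\R(\g))\cong \Hom_\sSe(\Ho{k}{n},\R(\g))\times\g_n$ follows formally; you instead run the corresponding fixed-point argument pointwise in $\g$. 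By naturality these amount to the same proof, so this is not a genuinely different route --- but one of your stated facts is false, and your verification leans on it.

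The false claim is that $\d a_{[n]\setminus\{k\}}$ lies in $\mathrm{L}(\Ho{k}{n})$. In the cobar construction on the Dupont-transferred coalgebra structure, the differential of a cell contains decomposable terms involving that very cell: already in $\mc^1$ one has $\d a_{01}=a_1-a_0+\sum_{m\geqslant 2}c_m\,\ell_m(a_0,\ldots,a_0,a_{01})$ with nonzero (Bernoulli-type) coefficients $c_m$ --- these are exactly the terms responsible for the summand $\frac{\id-\exp(\ad_\lambda)}{\ad_\lambda}(\d\lambda)$ in the gauge-action formula. By cosimpliciality the same happens in $\mc^n$: the differential $\d a_{[n]\setminus\{k\}}$ contains brackets involving $a_{[n]\setminus\{k\}}$ itself, which is not a horn cell (and, similarly, $\d a_{[n]}$ contains brackets involving $a_{[n]}$, not only $a_{[n]\setminus\{k\}}$ and horn cells). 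What is true, and what your argument actually needs, is: (i) $\mathrm{L}(\Ho{k}{n})$ is a dg-subalgebra of $\mc^n$, because $\d a_I$ only involves cells $a_J$ with $J\subseteq I$; (ii) the coefficient of $a_{[n]\setminus\{k\}}$ in $\d a_{[n]}$ is $(-1)^k$, hence invertible; (iii) every decomposable term strictly raises the complete filtration. Granting these, your fixed-point equation determining $\phi(a_{[n]\setminus\{k\}})$ is unaffected, and your final step --- differentiating the defining relation, using $\d^2 a_{[n]}=0$, and inverting a filtration-unipotent operator --- still closes, provided the self-referential terms of $\d a_{[n]\setminus\{k\}}$ are treated as part of the unipotent perturbation rather than as data already fixed by the horn. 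So the proof is reparable entirely within your framework, but it is not correct as written.
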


\begin{proof}[Sketch of proof]
The proof given in \cite[Section~5.1]{Robert-NicoudVallette20} relies on the following use of the left-adjoint functor, which satisfies 
	\[
	\mc^n=\mathrm{L}\big(\De{n}\big)\cong \mathrm{L}\big(\Ho{k}{d}\big)\,\widehat{\sqcup}\, \widehat{\sLi}(u, du)\ ,
	\]
for $n\geqslant 2$~, where $u$ is a generator of degree $d$ and where the differential is given by $d(u)=du$~.
This implies that there exists a natural bijection 	
\begin{equation}\label{eq1}\tag{$\ast$}
	\Hom_\sSe\left(\De{n}, \R(\g)\right)\cong  \Hom_\sSe\left(\Ho{k}{n}, \R(\g)\right)\times  \g_n~.
\end{equation}
\end{proof}

This proves that the set of horn fillers is non-empty since $0\in \g_n$~, but, even more interesting, it carries a canonical element that can be considered from the beginning in the definition of an $\infty$-groupoid. This changes radically the nature of $\R(\g)$ making it leave the realm of homotopy theory (defined by a property) 
and casting it into the world of algebra (defined by some structure). 

\begin{corollary} The image $\R(\g)$ of any complete $\sLi$-algebra $\g$ under the integration functor is canonically an \emph{algebraic} $\infty$-groupoid. 
\end{corollary}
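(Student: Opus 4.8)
The plan is to upgrade the Kan \emph{property} of $\R(\g)$ to the \emph{structure} of an algebraic $\infty$-groupoid, i.e. a simplicial set equipped, for every horn inclusion $\Ho{k}{n}\hookrightarrow \De{n}$ with $n\geqslant 1$ and $0\leqslant k\leqslant n$, with a chosen filler function
\[
\mathrm{fill}_{n,k}\colon \Hom_\sSe\!\left(\Ho{k}{n}, \R(\g)\right)\longrightarrow \Hom_\sSe\!\left(\De{n}, \R(\g)\right)
\]
that is a natural section of the restriction map and that is normalised, in the sense that degenerate horns are assigned degenerate fillers. Producing such a coherent system of sections is exactly what is needed, and the whole point is that \cref{thm:hornfillers} hands us one for free through the distinguished element $0\in\g_n$.

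First I would invoke the natural bijection of \cref{thm:hornfillers},
\[
\Hom_\sSe\!\left(\De{n}, \R(\g)\right)\cong \Hom_\sSe\!\left(\Ho{k}{n}, \R(\g)\right)\times \g_n\ ,
\]
and identify its first projection with the restriction map. Here the explicit construction behind \cref{thm:hornfillers} is essential: via the adjunction $\mathrm{L}\dashv\R$ and the representability $\R(\g)_n\cong \Hom_{\sLialg}(\mc^n,\g)$, the bijection is induced by the coproduct decomposition $\mc^n\cong \mathrm{L}\big(\Ho{k}{n}\big)\,\widehat{\sqcup}\,\widehat{\sLi}(u, du)$. Restriction along the canonical inclusion $\mathrm{L}\big(\Ho{k}{n}\big)\hookrightarrow \mc^n$ corresponds under adjunction precisely to precomposition with the horn inclusion, so the first component above is the restriction map, while the second records the image $\phi(u)\in \g_n$ of the free acyclic generator $u$ (of degree $n$, with $d u$ its boundary).

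Next I would define the canonical filler by transporting the distinguished element $0\in\g_n$: set $\mathrm{fill}_{n,k}(h)$ to be the unique simplex corresponding to the pair $(h,0)$. Since the first projection is the restriction, $\mathrm{fill}_{n,k}(h)$ restricts to $h$, so this is a genuine section; algebraically it is the morphism $\mc^n\to\g$ extending $\tilde h\colon \mathrm{L}\big(\Ho{k}{n}\big)\to\g$ by $u\mapsto 0$. The naturality of the bijection, both in the simplicial variable and in $\g$, ensures that these sections assemble into a functorial — hence canonical — choice, covering uniformly the inner horns and the outer horns $k=0,n$ responsible for invertibility, including the base case $n=1$ treated directly by the same bijection.

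The main obstacle I expect lies not in existence but in the verification of the normalisation axiom: one must check that a degenerate horn is sent to a degenerate simplex. This should follow because the extending value $0$ is preserved by all the cosimplicial structure maps of $\mc^\bullet$, which are strict morphisms of $\sLi$-algebras and therefore send $0$ to $0$; consequently the $0$-extension commutes with the degeneracies, and chasing this compatibility through the natural bijection of \cref{thm:hornfillers} yields the required normalisation. This produces the desired canonical algebraic $\infty$-groupoid structure on $\R(\g)$.
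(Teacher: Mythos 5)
Your proposal is correct and takes essentially the same route as the paper: the corollary follows directly from the natural bijection of \cref{thm:hornfillers}, by choosing for every horn the distinguished filler corresponding to the canonical element $0\in\g_n$ (equivalently, extending the morphism $\mathrm{L}\big(\Ho{k}{n}\big)\to\g$ by $u\mapsto 0$). The only divergence is your final normalisation check on degenerate horns, which is not required by the notion of algebraic $\infty$-groupoid intended here (a Kan complex equipped with chosen fillers), so that step --- whose justification is in any case only sketched --- can simply be dropped.
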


\begin{definition}[Higher Baker--Campbell--Hausdorff products]
	We call  \emph{higher Baker--Camp\-bell--Haus\-dorff products}\index{higher Baker--Campbell--Hausdorff products} the maps
	\begin{align*}
	\Hom_{\sSe}\left({\Ho{k}{n}}, \R(\g)\right) &\ \longrightarrow\ \g_{n-1}\\
	x &\ \longmapsto\ \Gamma^n_k(x)
	\end{align*}
	defined by the evaluation at the cell $a_{0, \ldots, \hat{k}, \ldots, n}$ of $\De{n}$ of the $n$-simplex inverse to 
	$(x;0)$ under the bijection \eqref{eq1}.
\end{definition}

Let $\left(A, \F, [\,,]\right)$ be a complete Lie algebra; its suspension $sA$ forms a complete $\sLi$-algebra concentrated in degree $1$. Therefore, it is made up of the trivial Maurer-Cartan element and trivial higher gauges, for any $n\geqslant 2$; only its set of gauges $\F_1 A_0$ might not be trivial. In this case, the data of a horn ${\Ho{1}{2}} \to \R(\g)$ is equivalent to the data of two elements $x$ and $y$ in $\F_1A_0$.

\begin{proposition}[{\cite[Proposition~5.2.36]{Bandiera14}}]
For any complete Lie algebra, the canonical  horn filler
\[\Gamma^2_1(x, y)=\mathrm{BCH}(x,y)\]
	is equal to the Baker--Campbell--Hausdorff formula.
\[
\begin{tikzpicture}

\coordinate (v0) at (210:1.5);
\coordinate (v1) at (90:1.5);
\coordinate (v2) at (-30:1.5);

\fill[black, opacity=0.1] (v0)--(v1)--(v2)--cycle;
\draw[line width=1, dashed] (v0)--(v1)--(v2)--cycle;
\draw[line width=1] (v0)--(v1)--(v2);

\begin{scope}[decoration={
	markings,
	mark=at position 0.55 with {\arrow{>}}},
line width=1
]

\path[postaction={decorate}] (v0) -- (v1);
\path[postaction={decorate}] (v1) -- (v2);
\path[postaction={decorate}] (v0) -- (v2);

\end{scope}


\node at ($(v0) + (30:-0.3)$) {$0$};
\node at ($(v1) + (0,0.3)$) {$0$};
\node at ($(v2) + (-30:0.3)$) {$0$};

\node at ($(v0)!0.5!(v1) + (-30:-0.4)$) {$x$};
\node at ($(v1)!0.5!(v2) + (30:0.4)$) {$y$};
\node at ($(v0)!0.5!(v2) + (0,-0.4)$) {$\BCH(x,y)$};

\node at (0,0) {$0$};

\end{tikzpicture}
\]
\end{proposition}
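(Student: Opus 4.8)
The plan is to unwind the definition of the canonical horn filler into an explicit Maurer--Cartan problem on the triangle and then to recognise its solution as the Baker--Campbell--Hausdorff series. First I would use the identification $\R(\g)_n\cong \MC\big(\g\,\wotimes\, C(\De{n})^\vee\big)$: here $\g=sA$ is concentrated in degree $1$, so in the decomposition $X=\sum_{\emptyset\neq I\subseteq[n]}X_I\otimes a_I^\vee$ a Maurer--Cartan element can only have nonzero components $X_I$ on the edges ($|I|=2$), since $|X_I|=|I|-1$ must equal $1$. For $n=2$ this means $X=x\otimes a_{01}^\vee+z\otimes a_{02}^\vee+y\otimes a_{12}^\vee$ with $x,z,y\in A$, and the datum $\g_2=(sA)_2=0$ in the bijection \eqref{eq1} forces the top cell and all vertices to carry $0$. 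By definition $\Gamma^2_1(x,y)$ is the $a_{02}^\vee$-component $z$ of the unique such filler, so the whole statement reduces to solving a single $A$-valued Maurer--Cartan equation for $z$ in terms of $x$ and $y$.

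Next I would write that Maurer--Cartan equation out. It lives in the degree $-1$ part of $sA\,\wotimes\,C(\De{2})^\vee$, which is one-dimensional over $A$ (spanned by $a_{012}^\vee$), so it is genuinely a single identity. Its linear part comes from the internal differential $\id\otimes D$, where $D$ is dual to the simplicial boundary $\partial a_{012}=a_{01}-a_{02}+a_{12}$; this contributes $x-z+y$. The nonlinear part is governed by the brackets $L_k$ of the $\sLi$-structure on $sA\,\wotimes\,C(\De{2})^\vee$, which are assembled from the single Lie bracket of $A$ and the transferred homotopy-commutative products $m_k$ on the cochains coming from Dupont's contraction. The crucial point --- and what rescues the computation from being merely quadratic --- is that for every $k\geqslant 2$ the product $m_k$ of $k$ edge-cochains already lands in the top cell $a_{012}^\vee$ by degree reasons, so each $L_k(X,\dots,X)$ contributes an iterated bracket of $x,y,z$ of length $k$. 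Collecting these, the equation takes the shape $z=x+y+\sum_{k\geqslant 2}c_k(x,y,z)$ with $c_k$ a sum of length-$k$ Lie monomials, and solving recursively (each new term being determined by the lower ones through the completeness of the filtration) yields $z=\Gamma^2_1(x,y)$ as a universal Lie series in $x$ and $y$. By naturality it suffices to compute this series in the free complete Lie algebra $\widehat{\Lie}(x,y)$.

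The last and hardest step is to identify this universal series with $\BCH(x,y)$. I would do this by pinning down the structure constants of the transferred products $m_k$, which are the integrals over the geometric simplices of products of Dupont's elementary forms; these are exactly the Bernoulli-type rational numbers organising the BCH series, and the recursion for $z$ above is then matched term-by-term with a standard presentation of $\BCH$. The conceptual reason this must work --- and a useful independent check --- is that the triangle filler realises the concatenation of the two gauge-paths labelling the edges $01$ and $12$: viewing $x$ and $y$ as a flat datum on the boundary, $z$ is the holonomy along the remaining edge, that is, the product of the two gauges in the gauge group, which by the classical characterisation $\exp(\ad_{\BCH(x,y)})=\exp(\ad_x)\circ\exp(\ad_y)$ is precisely $\BCH(x,y)$. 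The main obstacle is thus purely combinatorial: verifying that the Dupont structure constants reproduce the BCH coefficients (equivalently, that the horn-filling composition coincides with gauge-group multiplication); this is the technical heart of \cite[Proposition~5.2.36]{Bandiera14}, to which the remaining bookkeeping of signs and shuffle coefficients can be delegated.
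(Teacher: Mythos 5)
Your first two paragraphs are sound and give exactly the right skeleton: since $sA$ is concentrated in degree $1$, the identification $\R(\g)_2\cong \MC\big(sA\,\wotimes\, C(\De{2})^\vee\big)$ forces every component except the three edge components to vanish, the Maurer--Cartan equation is concentrated on the single top cell $a_{012}^\vee$, its linear part is $x-z+y$, and the transferred products push everything into a fixed-point equation $z=x+y+(\text{Lie monomials of length}\geqslant 2\ \text{in}\ x,y,z)$ whose unique solution, by completeness, is a universal Lie series in $x$ and $y$. Note that the paper itself offers no proof of this proposition --- it is quoted verbatim from \cite[Proposition~5.2.36]{Bandiera14} --- so your reduction is already more than the survey records; it also has the side benefit of showing that $\Gamma^2_1(x,y)$ lies in $\widehat{\Lie}(x,y)$ at all.

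The genuine gap is in your third paragraph, which is where the entire mathematical content of the proposition sits. First, the ``holonomy'' argument is circular: the assertion that the $02$-edge of the filler ``is the product of the two gauges in the gauge group'' \emph{is} the statement $\Gamma^2_1=\BCH$. Nothing in your setup connects horn-filling in the simplicial set $\R(sA)$ with concatenation of gauge flows; establishing that connection (for instance by identifying $\R(sA)$ with the nerve of the gauge group $\Gamma=\left(\F_1A_0,\BCH,0\right)$) is precisely the theorem, so invoking $\exp(\ad_{\BCH(x,y)})=\exp(\ad_x)\circ\exp(\ad_y)$ at that point begs the question. Second, the combinatorial route you sketch --- computing the Dupont-transferred structure constants on $C(\De{2})^\vee$ (Bernoulli-type numbers \`a la Cheng--Getzler) and matching the resulting recursion against a classical presentation of $\BCH$ --- is indeed how the cited proof proceeds, but you do not carry it out: you explicitly ``delegate'' it to \cite[Proposition~5.2.36]{Bandiera14}, which is the very statement under proof. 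As written, the proposal therefore reduces the proposition to itself. What your argument genuinely establishes is the weaker fact that $\Gamma^2_1(x,y)$ is \emph{some} universal Lie series of the form $x+y+\text{higher brackets}$; the identification of that series with the Baker--Campbell--Hausdorff series is the missing heart.
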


So the very first example of horn fillers provides us with the most universal formula in Lie theory, which is in fact obtained via (simplicial) homotopy theory. We can pursue this study further since the \emph{constructive} proof of \cref{thm:hornfillers} allows us to give \emph{explicit formulas} for all the horn fillers. To do so, we consider the set $\PaPRT$ of \emph{planarly partitioned rooted trees}, which are rooted trees with the additional data of a \emph{partition} into sub-trees called \emph{blocks} satisfying
\begin{itemize}
	\item[$\diamond$] each block contains at least one vertex,
	\item[$\diamond$] each block is either a tree with vertices of arity at least $2$ or the single $0$-corolla, and
	\item[$\diamond$] the tree obtained by stripping the partitioned tree of all of its leaves and contracting the blocks to vertices is planar but the rooted sub-trees inside each block have no planar structure.
\end{itemize}
\begin{figure*}[h!]
	\[
\vcenter{\hbox{
		\begin{tikzpicture}
			\def\scale{0.75};
			\pgfmathsetmacro{\diagcm}{sqrt(2)};
			
			\def\xangle{35};
			\pgfmathsetmacro{\xcm}{1/sin(\xangle)};
			
			\coordinate (r) at (0,0);
			\coordinate (v11) at ($(r) + (0,\scale*1)$);
			\coordinate (v21) at ($(v11) + (180-\xangle:\scale*\xcm)$);
			\coordinate (v22) at ($(v11) + (\xangle:\scale*\xcm)$);
			\coordinate (v31) at ($(v22) + (45:\scale*\diagcm)$);
			\coordinate (l1) at ($(v21) + (135:\scale*\diagcm)$);
			\coordinate (l2) at ($(v21) + (0,\scale*2)$);
			\coordinate (l3) at ($(v21) + (45:\scale*\diagcm)$);
			\coordinate (l4) at ($(v22) + (135:\scale*\diagcm)$);
			\coordinate (l5) at ($(v31) + (135:\scale*\diagcm)$);
			\coordinate (l6) at ($(v31) + (45:\scale*\diagcm)$);
			\coordinate (l7) at ($(v11) + (0,\scale*1)$);
			
			\draw[thick] (r) to (v11);
			\draw[thick] (v11) to (v21);
			\draw[thick] (v11) to (v22);
			\draw[thick] (v21) to (l1);
			\draw[thick] (v21) to (l2);
			\draw[thick] (v21) to (l3);
			\draw[thick] (v22) to (l4);
			\draw[thick] (v22) to (v31);
			\draw[thick] (v31) to (l5);
			\draw[thick] (v31) to (l6);
			\draw[thick] (v11) to (l7);
			
			\node[above] at (l1) {$\scriptstyle1$};
			\node at (l2) {$\bullet$};
			\node[above] at (l3) {$\scriptstyle2$};
			\node[above] at (l4) {$\scriptstyle4$};
			\node[above] at (l5) {$\scriptstyle5$};
			\node[above] at (l6) {$\scriptstyle6$};
			\node[above] at (l7) {$\scriptstyle3$};
			
			\draw (v11) circle[radius=\scale*0.5];
			\draw (v21) circle[radius=\scale*0.5];
			\draw (l2) circle[radius=\scale*0.5];
			\draw ($(v22)!0.5!(v31)$) ellipse[x radius=\scale*1.3, y radius=\scale*0.6, rotate=45];
		\end{tikzpicture}
	}}
	\]
	\caption{Example of a planarly partitioned rooted tree.}
	\label{Fig:PaPRT}
\end{figure*}
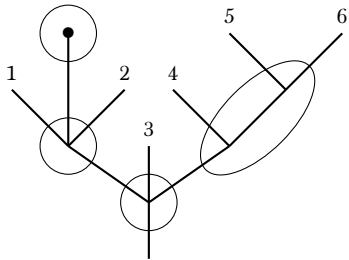
We consider the set $\mathsf{Lab}^{[n], k}(\tau)$ of maps 
\[
\chi \ : \ \mathrm{Leaves}(\tau) \longrightarrow\left\{I\varsubsetneq[n]\,\mid\, I\neq\emptyset,\widehat{k}\right\}~, 
\]
where $\widehat{k}\coloneq I\backslash \{k\}$, 
which amont to labelling the leaves of $\tau$~. 
For any horn $x \colon {\Ho{k}{n}} \to \R(\g)$ and any element 
$z\in \g$~,  we denote by $\ell_\tau(x_{\chi(1)}, \ldots, x_{\chi(p)};  z)$ the element of $\g$ obtained by forgetting the partition, replacing the leaves $l$ by the corresponding elements $x_{\chi(l)}\in\g$ and the vertices of arity $0$ by $z$~, and then applying the structure operations of the algebra at the vertices of the tree with the corresponding arity. In the example of the planarly partitioned rooted tree $\tau$ given in \cref{Fig:PaPRT}, we get 
	\[\ell_\tau(x_{\chi(1)}, \ldots, x_{\chi(6)};  z)=
	\ell_3\left(
	\ell_3\left(x_{\chi(1)}, z, x_{\chi(2)}\right)
	x_{\chi(3)}, 
	\ell_2\left(x_{\chi(4)}, \ell_2\left(x_{\chi(5)}, x_{\chi(6)}\right)\right)
	\right)~.
	\]

\begin{theorem}[{\cite[Proposition~5.10]{Robert-NicoudVallette20}}]\label{prop:ExplicitBCHForm}
	The higher BCH products are given by 
	\begin{equation*}
	\Gamma^n_k(x) = 
	\sum_{\substack{\tau\in\PaPRT\\
			\chi\in\mathsf{Lab}^{[n], k}(\tau)}}\ 
	\prod_{\substack{\beta\text{ block of } \tau \\ \lambda^{\beta(\chi)}_{[n]}\neq 0}} 
	\frac{(-1)^k}{\lambda^{\beta(\chi)}_{[n]}[\beta]!}\, 
	\ell_\tau\left(x_{\chi(1)}, \ldots, x_{\chi(p)};   \sum_{l\neq k}(-1)^{k+l+1} x_{\widehat{l}}\right)~, 
	\end{equation*}
where  $[\beta]$ denotes the arity of the block $\beta$ and where the coefficients $\lambda^{\beta(\chi)}_{[n]}$ are the structure constants  of the simplicial homotopy commutative algebras structure on $\mathrm{C}_\bullet$~, see \cite[Equation~(3)]{Robert-NicoudVallette20}.
\end{theorem}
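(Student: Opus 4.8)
The plan is to run the \emph{constructive} proof of \cref{thm:hornfillers} to its end and simply read off the resulting closed form. Recall that the bijection \eqref{eq1} rests on the change of generators, inside the quasi-free algebra $\mc^n\cong\big(\widehat{\sLi}(C(\De{n})),\d\big)$, that turns the top cell $a_{[n]}$ into an acyclic free pair $\widehat{\sLi}(u,\d u)$, giving $\mc^n\cong \mathrm{L}\big(\Ho{k}{n}\big)\,\widehat{\sqcup}\,\widehat{\sLi}(u,\d u)$. A morphism $\mc^n\to\g$ amounts to a choice of images $x_I\in\g$ of the generating cells $a_I$ subject to the Maurer--Cartan equation carried by $\d$; the horn $x\colon \Ho{k}{n}\to\R(\g)$ prescribes every $x_I$ except the top value $x_{[n]}$ (the $\g_n$-factor, set to $0$ in $\Gamma^n_k$) and the missing face $x_{\widehat{k}}=\Gamma^n_k(x)$, which is exactly what we must solve for.

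First I would make the differential of $\mc^n$ fully explicit. Since $\mc^\bullet=\hatCobar_\pi\mathrm{C}^\bullet$, the component $\d a_{[n]}$ splits into its linear part, the simplicial boundary $\sum_{l=0}^n(-1)^l a_{\widehat{l}}$, and a sum of higher terms; by Koszul duality the cobar $\hatCobar_\pi$ converts the higher coproducts of the cosimplicial homotopy commutative coalgebra $\mathrm{C}^\bullet$ into these $sL_\infty$-terms, which are therefore governed by the structure constants $\lambda^{\beta(\chi)}_{[n]}$ of the simplicial homotopy commutative algebra on $\mathrm{C}_\bullet$. Imposing the Maurer--Cartan equation on the top cell and isolating the linear appearance of the unknown generator $a_{\widehat{k}}$ (which enters with coefficient $(-1)^k$) turns the horn-filling condition, after setting the $\g_n$-factor to $0$, into the \emph{fixed-point equation}
\[\Gamma^n_k(x)=\sum_{l\neq k}(-1)^{k+l+1}x_{\widehat{l}}\ +\ (-1)^{k+1}\big(\text{higher brackets of }\g\text{, possibly involving }\Gamma^n_k(x)\big)\ .\]
The inhomogeneous leading term is precisely the element $z=\sum_{l\neq k}(-1)^{k+l+1}x_{\widehat{l}}$ that is inserted at the arity-$0$ vertices in the statement.

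Next I would solve this fixed-point equation by the standard iteration, which converges because the filtration of $\g$ is complete. Each round of substitution grafts one further layer of structure operations, and the bookkeeping of \emph{which} operations appear is exactly what $\PaPRT$ records: every application of a higher coproduct of $\mathrm{C}^\bullet$ contributes one \emph{block} together with the factor $\tfrac{1}{\lambda^{\beta(\chi)}_{[n]}[\beta]!}$ (the structure constant of $\mathrm{C}_\bullet$ and the symmetrisation of the associated symmetric bracket $\ell_{[\beta]}$ of $\g$), a reinsertion of $z$ contributes the lone $0$-corolla block, and the free horn faces $x_{\widehat{l}}$ decorate the leaves according to the labellings $\chi\in\mathsf{Lab}^{[n],k}(\tau)$. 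The \emph{ordered} (planar) structure of the contracted tree reflects the non-symmetry of the individual higher products on $\mathrm{C}_\bullet$, whereas the symmetry of the $sL_\infty$-brackets of $\g$ forces each block to be non-planar; this is exactly the defining asymmetry of a planarly partitioned rooted tree, and it also explains the trichotomy on blocks (arity-$\geqslant 2$ tree versus single $0$-corolla).

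The last step is to match signs and coefficients termwise, and this is where the main obstacle lies: one must verify that the repeated substitution produces \emph{each} planarly partitioned rooted tree exactly once, carrying the coefficient $\prod_\beta \tfrac{(-1)^k}{\lambda^{\beta(\chi)}_{[n]}[\beta]!}$, with neither over- nor under-counting. The delicate points are the compatibility of the block partition with the recursion (why blocks never mix the two allowed types), the accumulation of the per-block sign $(-1)^k$, and the precise interplay between the symmetric combinatorial factors $[\beta]!$ of the $\g$-brackets and the structure constants $\lambda^{\beta(\chi)}_{[n]}$ imported from $\mathrm{C}_\bullet$. Once this accounting is checked against \cite[Equation~(3)]{Robert-NicoudVallette20}, evaluating the reconstructed $n$-simplex at the cell $a_{0,\ldots,\widehat{k},\ldots,n}$ yields the asserted formula for $\Gamma^n_k(x)$.
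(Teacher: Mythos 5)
Your proposal is correct and follows essentially the same route as the paper: the paper derives the formula as a direct corollary of \cref{thm:hornfillers} together with the general tree-expansion solution of fixed-point equations for analytic maps (\cite[Proposition~A.5]{Robert-NicoudVallette20}), which is exactly the reduction you set up — isolating $x_{\widehat{k}}$ from the Maurer–Cartan condition on the top cell to get the fixed-point equation and solving it by complete-filtration iteration indexed by planarly partitioned rooted trees. The coefficient bookkeeping you flag as the remaining obstacle is precisely what that cited proposition (together with \cite[Equation~(3)]{Robert-NicoudVallette20}) packages, so your argument and the paper's differ only in whether that step is invoked or redone by hand.
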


\begin{proof}[Sketch of proof]
This formula is a direct corollary of \cref{thm:hornfillers} and the solution to the fixed-point equation associated to  analytic functions given in \cite[Proposition~A.5]{Robert-NicoudVallette20}. 
\end{proof}

\begin{application}
These higher BCH products allow us to give in \cite[Section~6.1]{Robert-NicoudVallette20} a short proof of a functorial version of Berglund's Hurewicz type theorem \cite{ber15}: for any complete $\sLi$-algebra $\g$ and any Maurer--Cartan element $\alpha \in \MC(\g)$~, there is a functorial group isomorphism
	\[
	\pi_n(\R(\g), \alpha)\cong {H}_n(\g^\alpha)\ , \quad \text{for}\  \ n\geqslant 1\ ,
	\]
	where $\g^\alpha$ stands for the complete $\sLi$-algebra twisted by $\alpha$ \cite[Chapter~4]{DotsenkoShadrinVallette22} and 
	where the group structure on the right-hand side is given by the Baker--Campbell--Hausdorff formula  for $n=1$  and by the sum  for $n\geqslant 2$~. 
	
\medskip 

These higher BCH products also allow us to prove the homotopy invariance of the integration functor: it sends filtered $\infty_\pi$-quasi-isomorphisms (a refined notion of $\infty$-quasi-isomorphisms) of complete $\sLi$-algebras to weak equivalences of simplicial sets \cite[Section~6.2]{Robert-NicoudVallette20}. This is an advanced generalization of the Goldman--Millson theorem \cite{GoldmanMillson88} saying that quasi-isomorphic dg Lie algebra encode equivalent deformation problems. 

\medskip

This higher Lie theory provides us with well behaved model of the rational homotopy type of spaces: for any pointed connected finite type simplicial set, the unit of the $\widetilde{\mathrm{L}}\!\dashv\!\widetilde{\mathrm{R}}$-adjunction, obtained from the 
$\mathrm{L}\!\dashv\!\mathrm{R}$-adjunction by removing artificial base points, 
 is homotopy equivalent to the Bousfield--Kan $\mathbb{Q}$-completion, see \cite[Theorem~7.19]{Robert-NicoudVallette20}. V. Roca i Lucio removed the pointed and connected assumptions by generalizing this theory to \emph{curved} complete $\sLi$-algebras in \cite{Roca24}. Notice that this work refines the methods in many fundamental ways: for instance, all the homotopical properties of the integration functor ($\infty$-groupoid, preservation of fibrations, homotopy invariance) are straightforward consequences of genuine new model category structures. 
\end{application}

\subsection{Absolute curved $\mathrm{EL}_\infty$-algebras}
Let us conclude this survey with the following question: is it possible to extend the (effective) integration theory of Lie type algebras beyond characteristic $0$? At first sight, since all the aforementioned formulas include rational coefficients with non-trivial denominators, there seems to be no hope in that direction. But once again, homotopy theory will lead us in the right direction. 

\medskip

In characteristic $0$, D. Sullivan \cite{Sullivan77} used commutative algebra structures on the cochains (or on the cohomology groups) of topological spaces to recover their rational homotopy type. The Koszul dual picture was given by D. Quillen  \cite{Quillen69} who rather considered Lie structures on the homotopy groups; a more direct approach is provided by the above integration theory of $\Li$-algebras, see also \cite{BFMT20}.
In characteristic $p>0$, the $p$-adic homotopy type of spaces was shown to by faithfully encoded in the $\mathrm{E}_\infty$-algebra structure of singular cochains by M. Mandell \cite{Mandell01}. Recall that $\mathrm{E}_\infty$ means ``everything'' homotopy commutative, where one relaxes both the associative relation \emph{and} the symmetry of the binary product, contrary to the abovementioned homotopy commutative structure on the cellular cochains $\mathrm{C}_\bullet$ of the standard geometric simplices where only the associative relation is relaxed up to homotopy. The following Koszul dual theory was recently developed by V. Roca i Lucio in \cite{Roca23}: it produces at the same time models for the $p$-adic homotopy type of spaces  from their homotopy groups and an (effective) integration theory in characteristic $p$. 

\medskip

Unlike the above integration theory of $\Li$-algebras, one cannot start anymore from the simplicial Dupont contraction to endow the 
cellular chains $\mathrm{C}^\bullet$ of the standard geometric simplices with a cosimplicial homotopy commutative coalgebra structure. 
The entry door to this new theory is the following canonical simplicial unital $\mathrm{E}_\infty$-coalgebra structure on $\mathrm{C}^\bullet$.

\begin{proposition}[\cite{BergerFresse04}]
The cellular chains $\mathrm{C}^\bullet$ of the standard geometric simplices admit a canonical 
cosimplicial unital $\mathcal{E}$-coalgebra structure, where $\mathcal{E}$
is the  Barratt--Eccles operad, whose arity $m$ component is made up of the bar construction of the symmetric group $\Sy_m$. 
\end{proposition}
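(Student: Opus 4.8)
The plan is to derive this statement as a natural, cosimplicial instance of the main construction of Berger and Fresse \cite{BergerFresse04}, rather than to build the coalgebra structure by hand. First I would recall their central result: the normalized chain functor $N_\ast \colon \sSe \to \dgVect$ lifts canonically to a functor valued in counital $\mathcal{E}$-coalgebras, where $\mathcal{E}$ is the Barratt--Eccles operad with $\mathcal{E}(m) = N_\ast(W\Sy_m)$ the normalized chains of the bar construction $W\Sy_m = E\Sy_m$. Concretely, for every simplicial set $X$ there are natural, $\Sy_m$-equivariant structure maps $\mathcal{E}(m) \otimes N_\ast(X) \to N_\ast(X)^{\otimes m}$ refining the Alexander--Whitney diagonal: the degree-zero generator of $\mathcal{E}(2)$ recovers the (a priori non-cocommutative) iterated coproduct, while the higher cells encode all the coherence homotopies witnessing cocommutativity up to homotopy. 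The essential input is Berger--Fresse's combinatorial description of this coaction through the table-reduction morphism $\mathcal{E} \to \mathcal{X}$ to the surjection operad, whose operations act by interval-cutting in the spirit of Steenrod's cup-$i$ products; this is the genuinely hard, technical part of the argument, and it is precisely what I would cite wholesale.

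Granting that, the remaining task is to promote the structure to the specific family $\mathrm{C}^\bullet = N_\ast(\De{\bullet})$. The second step is to observe that the standard geometric simplices assemble into a cosimplicial object $\De{\bullet} \colon \Delta \to \sSe$, the cofaces and codegeneracies being the usual simplicial maps among the $\De{n}$. Since the $\mathcal{E}$-coalgebra structure on $N_\ast(-)$ is natural in the simplicial set, applying the lifted functor levelwise to this cosimplicial diagram transports it into the category of counital $\mathcal{E}$-coalgebras and their strict morphisms. This produces exactly a cosimplicial counital $\mathcal{E}$-coalgebra whose underlying cosimplicial chain complex is $\mathrm{C}^\bullet$; no further coherence need be verified, as all of it is inherited verbatim from the naturality of the operadic coaction.

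The third step is to pin down the unital (counital) structure. The counit is supplied by the augmentation $N_\ast(\De{n}) \to N_\ast(\ast) = \k$ induced by the terminal map $\De{n} \to \De{0}$, and the coaugmentation by the inclusion of the initial vertex; both are compatible with the cofaces and codegeneracies by construction. I would check that these are morphisms of $\mathcal{E}$-coalgebras, which once again follows from naturality applied to the corresponding morphisms of simplicial sets, so that $\mathrm{C}^\bullet$ is genuinely \emph{unital} in the sense required for the later Koszul-dual cobar functor $\hatCobar_\pi$ to apply.

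I expect the only real obstacle to lie entirely inside the cited Berger--Fresse construction, namely the explicit, operadically coherent definition of the coaction on chains; once that is taken as a black box, the passage to the cosimplicial and counital refinement is formal and follows from functoriality. A secondary bookkeeping point worth confirming is that the coaction is compatible with the cell basis $a_I$ of $\mathrm{C}^\bullet$, so that the downstream Koszul-dual machinery sees the structure it expects; this is a verification rather than a conceptual difficulty.
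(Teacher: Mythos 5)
Your proposal is correct and takes essentially the same route as the paper, which offers no argument of its own for this proposition but cites \cite{BergerFresse04} wholesale for the hard combinatorial content (the natural, $\Sy_m$-equivariant $\mathcal{E}$-coalgebra structure on normalized chains, built via table reduction and the surjection operad), exactly as you do. The remaining step you spell out---applying this natural structure levelwise to the cosimplicial simplicial set $\Delta^{\bullet}$ and checking the counital/unital structure via functoriality---is precisely the formal content implicit in the paper's citation, so there is nothing to flag.
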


There is a remarkable paradigm shift here:  one encodes \emph{coalgebra} structures with \emph{operads} whose operations are viewed upside down with one input and several outputs, i.e. in a prop way. This is mandatory here since cooperads encode \emph{conilpotent} coalgebras, property that is not satisfied in the present case. 
Dual structures should then be \emph{algebras} over a \emph{cooperad}, notion that already exists. 

\begin{definition}[Absolute $\mathcal{C}$-algebra]
For any cooperad $\mathcal{C}$, an \emph{absolute $\mathcal{C}$-algebra} is an algebra over the \emph{dual Schur monad} 
\[\gamma \colon \prod_{m\geqslant 0} \Hom_{\Sy_m}\left(\mathcal{C}(m), A^{\otimes m}\right)\mapsto A~. \]
\end{definition}

In an absolute algebra, the action of series of operations make sense without any prescribed complete topology. 
In the present case, one considers the Koszul dual curved cooperad given by  the bar construction $\mathrm{B} \mathcal{E}$ of the 
Barratt--Eccles operad. 

\begin{definition}[Absolute curved $\mathrm{EL}_\infty$-algebras]
An \emph{absolute curved $\mathrm{EL}_\infty$-algebra} is an algebra over the curved cooperad $\mathrm{B} \mathcal{E}$~. 
\end{definition}

By definition, this notion is a suitable homotopy generalization of Lie algebras in prime characteristic, which is an explicit 
point-set model for the (spectral) partition Lie algebras introduced in \cite{BM19}, see also \cite{BCN23}.
 In a way similar to $\mathrm{E}_\infty$-algebras, the letter ``$\mathrm{E}$'' in the chosen terminology ``$\mathrm{EL}_\infty$'' means that not only the Jacobi relation is relaxed up to homotopy, but also the skew-symmetry of the Lie bracket. Like in the above integration theory for $\Li$-algebras, the complete cobar construction of the cosimplicial unital $\mathcal{E}$-coalgebra on $\mathrm{C}^\bullet$ produces 
a \emph{universal} cosimplicial absolute curved $\mathrm{EL}_\infty$-algebra $\widehat{\Omega}_\iota \mathrm{C}^\bullet$~. 
Now we have all the key ingredients for the integration theory in prime characteristic.

\begin{theorem}[{\cite[Theorem~2.5]{Roca23}}]
There is a commuting diagram of Quillen adjunctions 
\[
\begin{tikzcd}[column sep=2pc,row sep=3pc]
&\hspace{1.5pc} \Omega \mathrm{B} \mathcal{E}\text{-}\mathsf{coalg} 
\arrow[dd, harpoon, shift left=1.1ex,  "\widehat{\Omega}_{\iota}"{name=F}] \arrow[ld, harpoon, shift left=.75ex, ""{name=C}]\\
\mathsf{sSet}  \arrow[ru, harpoon, shift left=1.5ex, "C"{name=A}]  \arrow[rd, harpoon, shift left=1ex, "\mathrm{L}"{name=B}] \arrow[phantom, from=A, to=C, , "\dashv" rotate=-65]
& \\
&\hspace{3pc}\mathsf{abs}~\mathsf{curv}~\mathrm{EL}_\infty\textsf{-}\mathsf{alg} ~, \arrow[uu, harpoon, shift left=.75ex, "\widehat{\mathrm{B}}_{\iota}"{name=U}] \arrow[lu, harpoon, shift left=.75ex, "\mathrm{R}"{name=D}] \arrow[phantom, from=B, to=D, , "\dashv" rotate=-114] \arrow[phantom, from=F, to=U, , "\dashv" rotate=-180]
\end{tikzcd}
\]
where $C$ stands for the cellular chain functor.
\end{theorem}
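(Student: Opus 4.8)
The plan is to exhibit the triangle as the pasting of two simpler adjunctions---the cellular chain adjunction on top and the complete bar--cobar adjunction on the right---each of which is Quillen, and then to identify the two outer legs with $\mathrm{L}$ and $\mathrm{R}$. Since a composite of Quillen adjunctions is again Quillen, this reduces the statement to three tasks: fixing the model structures, checking that the two building-block adjunctions are Quillen, and proving that the diagram commutes.

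First I would fix the three model structures: the Kan--Quillen structure on $\sSe$, and cofibrantly generated (transferred) model structures on $\Omega \mathrm{B}\mathcal{E}\text{-}\mathsf{coalg}$ and on absolute curved $\mathrm{EL}_\infty$-algebras whose weak equivalences are detected by the relevant homotopical invariants. Next, the right-hand adjunction $\widehat{\Omega}_\iota \dashv \widehat{\mathrm{B}}_\iota$ is the complete bar--cobar adjunction attached to the canonical twisting morphism $\iota \colon \mathrm{B}\mathcal{E} \to \mathcal{E}$ coming from curved Koszul duality; I would verify it is Quillen by showing that the left adjoint $\widehat{\Omega}_\iota$ carries (acyclic) cofibrations of coalgebras to (acyclic) cofibrations of absolute algebras, using the quasi-free description of cofibrant coalgebras. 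The upper adjunction has left adjoint the cellular chain functor $C$, which lands in $\Omega \mathrm{B}\mathcal{E}\text{-}\mathsf{coalg}$ through the Berger--Fresse $\mathcal{E}$-coalgebra structure on $\mathrm{C}^\bullet$ restricted along the resolution $\Omega \mathrm{B}\mathcal{E} \xrightarrow{\sim} \mathcal{E}$; it is Quillen because $C$ sends monomorphisms to cofibrations and weak equivalences to weak equivalences.

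For commutativity I would use that $\mathrm{L}$ is, by construction, the left Kan extension along the Yoneda embedding of the universal cosimplicial object $n \mapsto \widehat{\Omega}_\iota \mathrm{C}^n$. Because $C(\De{n}) = \mathrm{C}^n$ by definition of the cellular chain functor on representables, the composite $\widehat{\Omega}_\iota \circ C$ agrees with $\mathrm{L}$ on the image of Yoneda; both functors are left adjoints, hence colimit-preserving, and every simplicial set is the colimit of its simplices, so $\widehat{\Omega}_\iota \circ C \cong \mathrm{L}$. By uniqueness of right adjoints the composite of $\widehat{\mathrm{B}}_\iota$ with the right adjoint of $C$ is then naturally isomorphic to the integration functor $\mathrm{R} = \Hom(\widehat{\Omega}_\iota \mathrm{C}^\bullet, -)$, which is exactly the commutativity of the lower triangle.

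The main obstacle is the construction of the model structure on absolute curved $\mathrm{EL}_\infty$-algebras together with the proof that the bar--cobar adjunction is Quillen in this absolute setting. Since these are algebras over the dual Schur monad $\gamma$, assembled from infinite products rather than from a conilpotent or complete filtration, the standard transfer and twisting-morphism arguments that govern conilpotent coalgebras in characteristic $0$ break down, and one must deploy the refined model-categorical techniques designed for absolute objects. Verifying that cofibrant objects admit the expected quasi-free form, and that the unit and counit are weak equivalences on the relevant subcategories, is where essentially all of the difficulty concentrates.
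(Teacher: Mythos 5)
Your proposal follows essentially the same route as the paper: both decompose the triangle into the cellular chain adjunction (via the Berger--Fresse coalgebra structure pulled back along the resolution $\Omega\mathrm{B}\mathcal{E}\xrightarrow{\sim}\mathcal{E}$) and the complete bar--cobar adjunction $\widehat{\Omega}_{\iota}\dashv\widehat{\mathrm{B}}_{\iota}$, with the Kan--Quillen, quasi-isomorphism, and transferred model structures, and both obtain commutativity from the fact that $\mathrm{L}\dashv\mathrm{R}$ is the adjunction induced by the cosimplicial object $\widehat{\Omega}_{\iota}\mathrm{C}^{\bullet}$. The only cosmetic difference is that in the paper the Quillen property of the right-hand adjunction is automatic, because the model structure on absolute curved $\mathrm{EL}_\infty$-algebras is \emph{defined} by transfer along the complete cobar construction, whereas you propose to verify preservation of (acyclic) cofibrations by hand.
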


\begin{proof}[Sketch of proof]
The two left-most adjunctions with the category of simplicial sets are produced respectively by the 
cosimplicial unital $\mathcal{E}$-coalgebra structure on $\mathrm{C}^\bullet$, pulled back along the 
cofibrant resolution $\Omega \mathrm{B} \mathcal{E} \xrightarrow{\sim} \mathcal{E}$, and the cosimplicial absolute curved  $\mathrm{EL}_\infty$-algebra $\widehat{\Omega}_\iota \mathrm{C}^\bullet$~. A tour de force of \emph{op. cit.} lies in the achievement  of a complete bar construction $\widehat{\mathrm{B}}_\iota$ right-adjoint to the complete cobar construction: it relies on the intricate description of the not-necessarily conilpotent cofree coalgebra. 

One considers the classical model category structure of Kan--Quillen on simplicial sets, a canonical one on 
$\Omega \mathrm{B} \mathcal{E}$-coalgebras where weak equivalences are quasi-isomorphism, and where one transfers this latter model category structure onto 
absolute curved  $\mathrm{EL}_\infty$-algebras under the complete cobar construction, see \cite{LGRL23}. 
The category of $\Omega \mathrm{B} \mathcal{E}$-coalgebras
is homotopically better behaved than that of 
$\mathcal{E}$-coalgebras since $\Omega \mathrm{B} \mathcal{E}$ is a cofibrant operad. 
\end{proof}

The sole fact that the integration functor $\R$ is a right Quillen functor implies all the required homotopical properties: the image of the integration functor lies in $\infty$-groupoids and it sends respectively  epimorphisms to fibrations and filtered quasi-isomorphisms to weak equivalences \cite[Theorem~2.6]{Roca23}.

\begin{application}
This new integration theory of absolute curved $\mathrm{EL}_\infty$-algebras provides us with Lie type models for the $p$-adic homotopy theory: for connected finite type nilpotent simplicial sets, the unit of the $\mathrm{L} \dashv \R$-adjunction is a $\mathbb{F}_p$-equivalence \cite[Theorem~3.5]{Roca23}.
\end{application}

The integration theory of curved absolute $\mathrm{EL}_\infty$-algebras described above is not only fundamental, it is all constructive; we conclude this text with the explicit form of the main formulas. 
The set of planar rooted trees with vertices of positive arity $m\neq1$, labelled by strings $\left(\sigma_0, \ldots, \sigma_{r-1}\right)\in (\Sy_m)^{r}$ of permutations such that $\sigma_i\neq \sigma_{i+1}$, forms a basis for the curved cooperad $\mathrm{B} \mathcal{E}$. Therefore, this set parametrises the structural operations of  absolute curved  $\mathrm{EL}_\infty$-algebras. Among these labelled rooted trees, we denote the labelled corollas by $c_m^{\left(\sigma_0, \ldots, \sigma_{r-1}\right)}$~. 

\begin{definition}[Maurer--Cartan element]
A \emph{Maurer--Cartan element} of a curved absolute $\mathrm{EL}_\infty$-algebra 
$\g=\left(A, \d, \gamma\right)$ is a  degree $0$ element $\alpha\in A_0$ satisfying the \emph{Maurer--Cartan equation}: 
\[\d\alpha + \gamma\left(
\sum_{m\geqslant 0\,, \, m\neq 1} c_m^{(\id)}(\alpha, \ldots, \alpha)
\right)=0~.\]
\end{definition}

The first operations, corresponding to the trivially labelled corollas $c_m^{(\id)}$, of any absolute curved $\mathrm{EL}_\infty$-algebra form an absolute curved  $\mathrm{A}_\infty$-algebra: this structure alone produces the aforementioned Maurer--Cartan equation. As usual, we denote 
by $\MC(\g)$ the set of Maurer--Cartan elements. 

\medskip

The gauge equivalence between them uses more planar rooted trees than corollas. 
We consider the set $\mathsf{LaPRT}$ of planar rooted trees 
with leaves labelled by $01$ or $1$ and 
with all the vertices having non-negative arity $m\neq1$, being labelled by strings $\left(\sigma_0, \ldots, \sigma_{r-1}\right)\in (\Sy_m)^{r}$ of permutations such that $\sigma_i\neq \sigma_{i+1}$, and satisfying the following conditions: 
\begin{itemize}
\item[$\diamond$]  from left to right appear first the leaves $1, \ldots, a$ labelled by $01$, with $a\geqslant 1$, then the leaves $a+1, \ldots, a+b$  labelled by $0$, with $b\geqslant 0$, and finally the internal edges $a+b+1, \ldots, a+b+c=m$, with $c\geqslant 0$~, 

\item[$\diamond$] the string $\left(\sigma_0, \ldots, \sigma_{a-1}\right)\in (\Sy_m)^{a}$ labelling the vertex has $a$ permutations, 

\item[$\diamond$] $\sigma_{a-1}^{-1}(a+b+1)< \sigma_{a-1}^{-1}(a+b+2) <\cdots < \sigma_{a-1}^{-1}(m)$~, when $c\geqslant 1$~, 

\item[$\diamond$] for any $0\leqslant i \leqslant a-1$, the set $I_i\coloneq \sigma_i\left( \left\{ 1, \ldots, \sigma_i^{-1}(i+1)-1\right\}\right)$ is a subset of $\{a+1, \ldots, a+b\}$ and the sets $J_i\coloneq I_i\backslash \bigcup_{j=0}^{i-1} I_j$ form an ordered $J_0<\cdots< J_{a-1}$  partition of $\{a+1, \ldots, a+b\}$~.
\end{itemize}
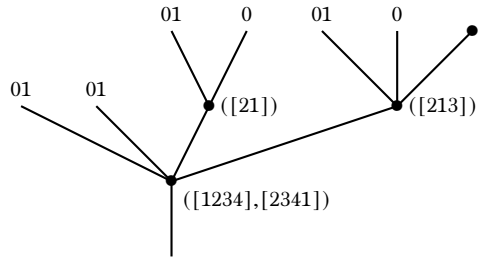
\begin{figure*}[h!]
	\[
\vcenter{\hbox{
\begin{tikzpicture}
	\coordinate (r) at (0,-1);
	\coordinate (v1) at (0,0);
	
	\coordinate (v2) at (-2, 1);	
	\coordinate (v3) at (-1,1);		
	\coordinate (v4) at (0.5,1);		
	\coordinate (v5) at (3,1);				
	
	\coordinate (v6) at (0, 2);		
	\coordinate (v7) at (1, 2);		
	\coordinate (v8) at (2, 2);		
	\coordinate (v9) at (3, 2);		
	\coordinate (v10) at (4, 2);		
					
	\draw[thick] (r) to (v1);
	\draw[thick] (v1) to (v2);	
	\draw[thick] (v1) to (v3);		
	\draw[thick] (v1) to (v4);	
	\draw[thick] (v1) to (v5);	
	\draw[thick] (v4) to (v6);				
	\draw[thick] (v4) to (v7);					
	\draw[thick] (v5) to (v8);						
	\draw[thick] (v5) to (v9);						
	\draw[thick] (v5) to (v10);								
			
	\node at (v1) {$\bullet$};
	\node at (v4) {$\bullet$};
	\node at (v5) {$\bullet$};		
	\node at (v10) {$\bullet$};			

	\node[above] at (v2) {$\scriptstyle 01$};
	\node[above] at (v3) {$\scriptstyle 01$};	
	\node[above] at (v6) {$\scriptstyle 01$};		
	\node[above] at (v8) {$\scriptstyle 01$};		
	\node[above] at (v7) {$\scriptstyle 0$};			
	\node[above] at (v9) {$\scriptstyle 0$};				

	\node[right] at (v4) {$\scriptstyle ([21])$};				
	\node[right] at (v5) {$\scriptstyle ([213])$};					
	\node[below right] at (v1) {$\scriptstyle ([1234], [2341])$};						
\end{tikzpicture}
	}}
	\]
	\caption{Example of a labelled planar rooted tree.}
	\label{Fig:PaPRT}
\end{figure*}

\begin{definition}[Gauge equivalence]
Two Maurer--Cartan elements $\alpha, \beta$ of a curved absolute $\mathrm{EL}_\infty$-algebra 
$\g=\left(A, \d, \gamma\right)$ are \emph{gauge equivalent} if there exists a \emph{gauge}  $\lambda\in A_1$ satisfying 
\[
\beta = \gamma\left(\sum_{\tau \in \mathsf{LaPRT}}
\tau(\lambda, \alpha ; \d \lambda + \alpha)~, 
\right)
\]
where 
$\tau(\lambda, \alpha ; \d \lambda + \alpha)$ is the image under the operation corresponding to the tree $\tau$ with leaves labelled by $01$ having input $\lambda$, leaves labelled by $1$ having input $\alpha$, and vertices of arity $0$ labelled by $\d \lambda+\alpha$. 
In this case, we use the notation $\alpha\sim_\lambda\beta$~.
\end{definition}

Notice that all the coefficients are equal to $1$, which justifies the relevance of this formula in prime characteristic. 

\begin{theorem}[{\cite[Theorem~2.14]{Roca23}}]
The image of any curved absolute $\mathrm{EL}_\infty$-algebra 
$\g=\left(A, \d, \gamma\right)$ under the integration functor satisfies 
\[\R(\g)_0 = \MC(\g)~,  \quad \R(\g)_1=\left\{(\alpha, \beta, \lambda) \in A_0^2\times A_1~, \alpha \sim_\lambda \beta\right\}~,  \quad and \quad \pi_0(\R(\g))\cong \mathcal{MC}(\g)~. \]
\end{theorem}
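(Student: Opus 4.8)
\section*{Proof proposal}

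The plan is to unfold the definition of the integration functor $\R$ as the right adjoint $\Hom_{\mathsf{abs}~\mathsf{curv}~\mathrm{EL}_\infty}\!\left(\widehat{\Omega}_\iota \mathrm{C}^\bullet, \g\right)$ to the complete cobar construction, so that computing $\R(\g)_n$ reduces to describing the representing object $\widehat{\Omega}_\iota \mathrm{C}^n$ explicitly for $n=0,1$. This mirrors the strategy used earlier for $\Li$-algebras, where the universal Maurer--Cartan algebra $\mc^\bullet$ was analyzed in low simplicial degree: there one found $\mc^0\cong\widehat{\sLi}(a_0)$ and $\mc^1\cong\widehat{\sLi}(a_0,a_1,a_{01})$, and I expect the same skeletal picture here, now as quasi-free absolute curved $\mathrm{EL}_\infty$-algebras.

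First I would identify $\widehat{\Omega}_\iota \mathrm{C}^0$. Since the geometric $0$-simplex carries a single cell $a_0$ in degree $0$, the complete cobar construction produces the quasi-free absolute curved $\mathrm{EL}_\infty$-algebra on one degree-$0$ generator, whose differential is dictated by the coaugmentation and the curved cooperad $\mathrm{B}\mathcal{E}$ through the trivially labelled corollas $c_m^{(\id)}$. A morphism $f:\widehat{\Omega}_\iota \mathrm{C}^0\to\g$ is then determined by $\alpha:=f(a_0)\in A_0$, and commutation with the differential is exactly the Maurer--Cartan equation $\d\alpha+\gamma\big(\sum_{m\neq 1}c_m^{(\id)}(\alpha,\ldots,\alpha)\big)=0$. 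This yields $\R(\g)_0\cong\MC(\g)$.

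Next I would identify $\widehat{\Omega}_\iota \mathrm{C}^1$. The $1$-simplex carries the two vertex cells $a_0,a_1$ in degree $0$ and the edge cell $a_{01}$ in degree $1$, so the cobar construction is quasi-free on $a_0,a_1,a_{01}$. A morphism to $\g$ selects $\alpha:=f(a_0)$, $\beta:=f(a_1)$ in $A_0$ and $\lambda:=f(a_{01})$ in $A_1$; commutation with the differential on the two vertex generators forces $\alpha,\beta\in\MC(\g)$, while commutation on the edge generator $a_{01}$ produces the gauge equivalence relation. The key computation is that the differential of $a_{01}$, built from the cosimplicial $\mathcal{E}$-coalgebra structure on $\mathrm{C}^\bullet$ dualized through the cobar, reproduces precisely the sum over the set $\mathsf{LaPRT}$ appearing in the defining identity $\beta=\gamma\big(\sum_{\tau\in\mathsf{LaPRT}}\tau(\lambda,\alpha;\d\lambda+\alpha)\big)$, whence $\R(\g)_1\cong\{(\alpha,\beta,\lambda)\in A_0^2\times A_1\mid\alpha\sim_\lambda\beta\}$.

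Finally, for $\pi_0$, I would read off the two face maps $d_0,d_1:\R(\g)_1\to\R(\g)_0$ induced by the two coface maps $\mathrm{C}^0\to\mathrm{C}^1$: they send a $1$-simplex $(\alpha,\beta,\lambda)$ to $\beta$ and $\alpha$ respectively, so the relation generated by $1$-simplices is exactly gauge equivalence. Since $\R$ is a right Quillen functor, its image lies in Kan complexes, so gauge equivalence is already an equivalence relation and $\pi_0(\R(\g))$ is the honest quotient of $\MC(\g)$ by it, namely $\mathcal{MC}(\g)$. The main obstacle throughout is the combinatorial identification of the cobar differential on $a_{01}$ with the $\mathsf{LaPRT}$ expansion: one must track the faces and degeneracies of the Barratt--Eccles coalgebra $\mathrm{C}^\bullet$ through the bar construction and verify that the emerging strings of permutations $(\sigma_0,\ldots,\sigma_{r-1})$ and the admissibility conditions on the leaf labels match those listed in the definition of $\mathsf{LaPRT}$, together with the crucial fact that all structure constants collapse to $1$.
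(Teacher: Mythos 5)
Your overall strategy --- unfold $\R$ as the functor represented by $\widehat{\Omega}_\iota \mathrm{C}^\bullet$ and compute the representing objects in simplicial degrees $0$ and $1$ --- is exactly the paper's: its sketch rests on the explicit $\mathcal{E}$-coalgebra structures on $C(\Delta^0)$ and $C(\Delta^1)$ from Berger--Fresse, which is precisely what your quasi-free descriptions of $\widehat{\Omega}_\iota\mathrm{C}^0$ and $\widehat{\Omega}_\iota\mathrm{C}^1$ amount to. The degree-$0$ computation (the differential on $a_0$ forcing the Maurer--Cartan equation) and the $\pi_0$ argument via the face maps and the Kan property are sound.

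There is, however, a genuine gap in your key step for $\R(\g)_1$. Commuting a morphism $f\colon\widehat{\Omega}_\iota\mathrm{C}^1\to\g$ with the differential on the edge generator $a_{01}$ does not ``reproduce precisely'' the $\mathsf{LaPRT}$ sum: it yields an \emph{implicit} equation of the shape
\[
\d\lambda \;=\; \beta-\alpha \;-\; \gamma\bigl(\text{decomposition terms of } a_{01}\bigr)~,
\]
in which $\beta$ and $\lambda$ themselves reappear inside $\gamma(\cdots)$, because the $\mathcal{E}$-coalgebra coproducts of $a_{01}$ (already the Alexander--Whitney component $a_0\otimes a_{01}+a_{01}\otimes a_1$) involve the cells $a_1$ and $a_{01}$. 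The gauge-equivalence formula in the statement, $\beta=\gamma\bigl(\sum_{\tau\in\mathsf{LaPRT}}\tau(\lambda,\alpha;\d\lambda+\alpha)\bigr)$, has no $\beta$ on its right-hand side: it is the \emph{solution} of that implicit equation, obtained by recursive substitution. This is the second pillar of the paper's proof, the fixed-point theorem for analytic functions of \cite[Proposition~A.5]{Robert-NicoudVallette20} (the same device behind the explicit higher BCH products), and it is what generates the trees of $\mathsf{LaPRT}$, their admissibility conditions on the permutation strings, and in particular the arity-$0$ vertices labelled by $\d\lambda+\alpha$. Without invoking it, no direct combinatorial matching of the cobar differential with the $\mathsf{LaPRT}$ expansion can succeed, so what you describe as the ``main obstacle'' is not bookkeeping of faces and degeneracies but a missing ingredient of the argument.
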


\begin{proof}[Sketch of proof]
This is a consequence of the respective $\mathcal{E}$-coalgebra structures on the cellular chains $C(\Delta^0)$ and $C(\Delta^1)$ given in \cite{BergerFresse04} and the 
the solution to the fixed-point equation associated to analytic functions given in \cite[Proposition~A.5]{Robert-NicoudVallette20}. For more details, we refer the reader to \emph{op. cit.}.
\end{proof}

\bibliographystyle{alpha}
\bibliography{bib}

\end{document}